\providecommand{\U}[1]{\protect\rule{.1in}{.1in}}
\newtheorem{theorem}{Theorem}[section]
\newtheorem{proposition}[theorem]{Proposition}
\newtheorem{corollary}[theorem]{Corollary}
\newtheorem{lemma}[theorem]{Lemma}
\newtheorem{conjecture}[theorem]{Conjecture}
\newtheorem{remark}[theorem]{Remark}
\newtheorem{example}[theorem]{Example}
\newtheorem{algorithm}[theorem]{Algorithm}
\newenvironment{proof}{\noindent{\em Proof:}}{$\Box$~\\}
\newcommand{\sign}{{\rm sign~}}
\newcommand{\bR}{\mathbb R}
\begin{document}

\title{Locating and counting equilibria of the Kuramoto model \\ with rank one coupling}
\author{Owen Coss\thanks{Department of Mathematics, North Carolina State University (otcoss@ncsu.edu, \url{www.math.ncsu.edu/\~otcoss}). }
\and Jonathan D. Hauenstein\thanks{Department of Applied and Computational
Mathematics and Statistics, University of Notre Dame (hauenstein@nd.edu,
\url{www.nd.edu/\~jhauenst}). This author was partially supported by NSF grant
ACI-1460032, Sloan Research Fellowship BR2014-110 TR14, U.S. Army Research Office grant W911NF-15-1-0219 under the Young Investigator Program, and Office of Naval Research grant
N00014-16-1-2722.}
\and Hoon Hong\thanks{Department of Mathematics, North Carolina State University
(hong@ncsu.edu, \url{www.math.ncsu.edu/\~hong}). This author was partially supported by NSF grant 1319632.}
\and Daniel K. Molzahn\thanks{Energy Systems Division, Argonne National Laboratory
(dmolzahn@anl.gov).} }
\date{\today}
\maketitle

\begin{abstract}
\noindent The Kuramoto model describes synchronization behavior among coupled oscillators and enjoys successful application in a wide variety of fields. Many of these applications seek 
phase-coherent solutions, i.e., equilibria of the model. Historically, research has focused on situations where the number of oscillators, $n$, is extremely large and can be treated as being infinite. More recently, however, applications have arisen in areas such as electrical engineering with more modest values of $n$. For these, the equilibria can be located by finding the real solutions of a system of polynomial equations utilizing techniques from algebraic geometry. However, typical methods for solving such systems locate all complex solutions even though only the real solutions give equilibria.

In this paper, we present an algorithm to locate only the real solutions 
of the model, thereby shortening computation time
by several orders of magnitude in certain situations. This is accomplished by choosing specific equilibria representatives and the consequent algebraic decoupling
of the system. The correctness of the algorithm (that it finds only and all the equilibria) is proved rigorously. Additionally, the algorithm can be implemented using interval methods so that the equilibria can be approximated up to any given precision without significantly more computational effort. We also compare this solving approach to other computational algebraic geometric methods. 

Furthermore, analyzing this approach allows us to prove, asymptotically, that the maximum number of 
equilibria grows at the same rate as the number of complex solutions of a corresponding polynomial
system. Finally, we conjecture an upper bound on the maximum number of equilibria for any number of oscillators which generalizes the known cases and is obtained on a range of explicitly provided natural frequencies.

\medskip

\noindent\textbf{Keywords}. Kuramoto model, 
equilibria, univariate solving, homotopy continuation, numerical algebraic geometry

\medskip

\noindent\textbf{AMS Subject Classification.} 65H10, 68W30, 14Q99

\end{abstract}


\section{Introduction}

\label{sec:Intro} 

Oscillatory dynamics characterize many important systems. 
For such systems, it is important
to understand the synchronization behavior of coupled oscillators, especially when conducting stability assessments. 
Synchronization behavior is characterized by the equilibria of the associated dynamic model.
This paper is concerned with locating and counting  equilibria of a certain generalization of the Kuramoto model~\cite{kuramoto1975}, which we call a {\em rank-one coupled}
 Kuramoto model.
 
\paragraph{Kuramoto model:}
The standard Kuramoto model for $n\geq 2$ oscillators
has all-to-all and uniform coupling among the
oscillators.  It is formulated as
the following system of coupled first-order ordinary differential equations:
\begin{equation}
\frac{d\theta_{\nu}}{dt}=\omega_{\nu}-\frac{K}{n}\sum_{\mu=1}^{n}\sin(\theta_{\nu}%
-\theta_{\mu}),\hbox{~~~~~~~for~}\nu=1,\dots,n \label{eq:Kuramoto}%
\end{equation}
where $K>0$ is the uniform coupling strength,
and each parameter $\omega_{\nu}$ and variable $\theta_{\nu}$ 
denote the natural frequency and
phase angle of the $\nu^{\rm th}$ oscillator,
respectively.
There is a large body of literature for the Kuramoto model~\eqref{eq:Kuramoto} and its
many variants, e.g., non-uniform coupling among oscillators and allowance for
second-order dynamics.  
The wide variety of applications of the Kuramoto model in modeling oscillatory behavior include
electrical engineering~\cite{dorfler_bullo2012, dorfler2013, wiesenfeld1998},
biology~\cite{sompolinsky1990}, and
chemistry~\cite{bar-eli1985, kuramoto1984, neu1980}.
See~\cite{acebron2005, dorfler2013, strogatz2000} and the references therein 
for a more detailed survey of the relevant literature
and extensive applications.

\paragraph{Rank-one coupled Kuramoto model:}
In this paper, we consider a slight generalization
with a non-uniform coupling between the oscillators
described by a symmetric rank-one matrix.  
In particular, for $k = (k_1,\dots,k_n)\in\bR_{>0}^n$, 
the $\nu^{\rm th}$ and $\mu^{\rm th}$ oscillators
are coupled with strength $k_\nu k_\mu$ yielding the
model
\begin{equation}
\frac{d\theta_{\nu}}{dt}=\omega_{\nu}-\frac{1}{n}\sum_{\mu=1}^{n} k_\nu k_\mu\sin(\theta_{\nu}%
-\theta_{\mu}),\hbox{~~~~~~~for~}\nu=1,\dots,n. \label{eq:model_equiv}%
\end{equation}
The standard Kuramoto case~\eqref{eq:Kuramoto} corresponds with $k = (\sqrt{K},\dots,\sqrt{K})$.
We are concerned with the equilibria  of the rank-one coupled Kuramoto model
\eqref{eq:model_equiv}, which are the real solutions to the
system of nonlinear equations resulting from setting $\dfrac{d\theta_{\nu}}{dt}$ equal to $0$ in \eqref{eq:model_equiv}, namely 
\begin{equation}
\omega_{\nu}=\frac{1}{n}\sum_{\mu=1}^{n}k_\nu k_\mu \sin(\theta_{\nu}-\theta_{\mu}),\hbox{~~~~~~~for~}\nu
=1,\dots,n. \label{eq:Kuramoto_equilibrium}%
\end{equation}
That is, we aim to compute the values of the variables $\theta_1,\dots,\theta_n$ such that \eqref{eq:Kuramoto_equilibrium}
holds for given values of the parameters $n$, $k_1,\dots,k_n$,
and $\omega_1,\dots,\omega_n$. This generalization was originally motivated by applications where the coupling is non-uniform, such as in a power flow model~\cite{dorfler_bullo2012, dorfler2013, wiesenfeld1998}
in which the coupling matrix could be of arbitrary rank.
However, as demonstrated in Ex.~\ref{ex:PowerFlow},
with a lossless power system and uniform line
susceptances, the equilibria of the power
flow equations correspond to the equilibria
of the rank-one coupled Kuramoto model~\eqref{eq:Kuramoto_equilibrium}.
Hence, \eqref{eq:Kuramoto_equilibrium}
can be viewed as an initial generalization 
(rank one) toward the full generalization (arbitrary rank).

We address two natural problems: locating all 
equilibria and counting them.

\paragraph{Locating all equilibria:}
In~\cite{MMT16, mehta2015algebraic, salam1989}, 
homotopy continuation
and numerical algebraic geometry~\cite{BHSW06, sommese2005}
were applied to the standard Kuramoto
model and various non-uniform coupling 
generalizations by converting 
the corresponding system describing the
equilibria into a polynomial system.
For example, 
with $s_{\nu}=\sin(\theta_{\nu})$ and $c_{\nu}=\cos(\theta_{\nu})$,
\eqref{eq:Kuramoto_equilibrium} 
corresponds to the polynomial~system 
\begin{align}
\label{eq:Kuramoto_poly}%
\omega_{\nu}  
 =\frac{1}{n}\sum_{\mu=1}^{n} k_\nu k_\mu
\left(  s_{\nu}c_{\mu}-s_{\mu}c_{\nu}\right),
& & & 1  =c_{\nu}^{2}+s_{\nu}^{2}, 
& \;\;\;\;\;\;\hbox{~for~}\nu=1,\dots,n.
\end{align}

Even though all complex solutions were computed,
only the real solutions are physically meaningful, 
i.e., correspond to equilibria,
so that a post-processing step is necessary to filter out the non-real solutions.  
In other words, homotopy continuation
expends computational effort to compute all
complex solutions when only the real solutions are relevant. 
Using parallel computing techniques, such a method
has been applied to problems with $n\leq18$~\cite{mehta2015algebraic}. 

In~\cite{thorp1993}, a specialized continuation method 
was proposed which computes only equilibria
so that the computational cost 
scales with the number of
real solutions of the corresponding polynomial
system rather than the number of complex
solutions. Moreover, this continuation method is applicable to a more general class of problems (the power flow equations) which include~\eqref{eq:Kuramoto_equilibrium} as a special case. However, the robustness proof 
showing that it locates all equilibria for this more general class of problems
was shown to be flawed~\cite{chen2011} 
with a counterexample presented in~\cite{counterexample2013}. 
In \cite{lesieutre_wu_allerton2015}, a modification of the
method based on an elliptical reformulation of 
equations was shown to have improved
robustness. There currently does not exist a
robustness proof for this modification or a known
counterexample, so the capabilities of this method remain to be fully characterized.

In summary, despite significant progress,      
the aforementioned approaches either quickly become
intractable as $n$ increases 
or are not proven to find all equilibria.
One of the main contributions of this paper is to provide  a new algorithm that can handle much larger 
values of $n$ which is also rigorously proved 
to find all equilibria.  For instance, in Section~\ref{sec:Performance}, we demonstrate
our approach on an example with $n = 60$
which computes all equilibria in under a second.

\paragraph{Counting equilibria:}
The second problem is to determine the maximum number of equilibria (up to trivial shifts -- see Section~\ref{sec:Reduction}).
Existing upper bounds on the number of equilibria are based on bounds for the number of
complex solutions to~\eqref{eq:Kuramoto_poly}. 
In~\cite{baillieul1982}, an upper bound on the number
of equilibria of the Kuramoto model with an
arbitrary coupling matrix $\kappa\in\bR^{n\times n}$,
i.e., the equilibria satisfy
\begin{equation}
\omega_{\nu}=\frac{1}{n}\sum_{\mu=1}^{n}\kappa_{\nu\mu} \sin(\theta_{\nu}-\theta_{\mu}),\hbox{~~~~~~~for~}\nu
=1,\dots,n, \label{eq:Kuramoto_equilibrium2}%
\end{equation}
is $\binom{2n-2}{n-1}$.
This bound is sharp for $n=2$ and $n=3$. 
It is an open
question (first posed in~\cite[Question~5.1]{baillieul1982}) whether the
upper bound of $\binom{2n-2}{n-1}$ can be achieved for $n\geq4$. 

Other research~\cite{CM15, CMN16, MMN16} 
has produced tighter upper bounds on the number
of complex solutions to~\eqref{eq:Kuramoto_equilibrium2} 
when the oscillators are not completely 
connected, i.e., ``topologically dependent''~bounds.

The number of equilibria for the standard Kuramoto model
has been studied for small values of $n$.  
In the standard Kuramoto setting,
i.e., $k = (\sqrt{K},\dots,\sqrt{K})$, there are at most $2$ equilibria satisfying~\eqref{eq:Kuramoto_equilibrium} when $n = 2$.  
For $n = 3$ and $n = 4$, elimination theory
was used in \cite{XKL16} to produce
a degree six and degree fourteen univariate polynomial, respectively, yielding bounds of at most $6$ and $14$
equilibria.  Morse Theory was used to derive similar results 
for the $n=3$ and $n=4$ cases in~\cite{baillieul1982}.
These aforementioned bounds are tight for
$n = 2$ and $n = 3$, but it is currently
unknown whether the upper bound of $14$ 
can be achieved for $n=4$. 
The authors of~\cite{XKL16} find a maximum of $10$ equilibria in the $n = 4$ case, which is smaller than the upper bound of~$14$.  Since this maximum was obtained via a computational experiment which gridded the parameter space, they conjecture that~$10$
is indeed the maximum number of equilibria
when $n = 4$.

In summary, despite significant progress,
there remains several open questions
regarding the number of equilibria
to the rank-one coupled Kuramoto model.
First, for the polynomial system~\eqref{eq:Kuramoto_poly}, 
the generic root count, which is the 
number of solutions for generic
values of the parameters,
is unknown.  Clearly, this is bounded
above by $\binom{2n-2}{n-1}$
which is the generic root count for the corresponding
polynomial system 
in the arbitrary coupling case \eqref{eq:Kuramoto_equilibrium2}.
Moreover, the quality of the relationship
between the maximum number of equilibria 
and the generic root count has not been explored.
Three contributions of this paper
are to provide such a generic root count for~\eqref{eq:Kuramoto_poly}, count the number
of equilibria in particular cases, 
and use these cases to analyze the asymptotic
behavior of the ratio between the maximum number of equilibria
and the generic root count for \eqref{eq:Kuramoto_poly}.

\paragraph{Approach:}
This paper locates and counts
equilibria for arbitrary $n$ by reformulating~\eqref{eq:Kuramoto_equilibrium} 
into a family of decoupled univariate radical equations. (This
reformulation is similar in spirit but different than the approach
in~\cite{XKL16}. Further, the proposed reformulation is not limited to
$n=2,3,4$.) 
This reformulation enables the development of 
both new theoretical results and computational tools. 
Our solving algorithm exploits 
this reformulation together with new results
regarding cases where equilibria cannot exist.
Computational experiments demonstrate that this
algorithm can be several orders of magnitude faster 
than the more general
computational algebraic geometric 
methods~\cite{BHSW06, M2, MMT16, mehta2015algebraic, salam1989, sommese2005}
and elliptical continuation~\cite{lesieutre_wu_allerton2015, thorp1993} algorithms
when applied to~\eqref{eq:Kuramoto_poly}. 

This reformulation allows us to count
the number of equilibria 
for \eqref{eq:Kuramoto_equilibrium} 
where the parameters are carefully chosen
to have many equilibria.  
These results extend a conjecture from~\cite{XKL16}
that the maximum number of equilibria
for the standard Kuramoto model when $n = 4$ is $10$.
Moreover, the particular cases 
allow us to show that the 
maximum number of equilibria 
and the generic root count 
for~\eqref{eq:Kuramoto_poly} have
the same asymptotic scaling.
This suggests that algorithms which only compute
equilibria to~\eqref{eq:Kuramoto_equilibrium} will,
in the worst-case, computationally scale similar to algorithms that 
compute all the complex solutions~to~\eqref{eq:Kuramoto_poly}. However, it may be the case that algorithms which compute
only the real solutions to~\eqref{eq:Kuramoto_poly} have significant
computational advantages for many practical problems. For instance, typical operating conditions 
of power flow problems
are expected to have few equilibria
relative to the number of complex solutions~\cite{salam1989}.

The rest of the paper is organized as follows. 
Section~\ref{sec:Reduction} 
presents the decoupling approach.
Section~\ref{sec:Algorithm} describes an algorithm that uses 
this reformulation to compute all equilibria 
satisfying~\eqref{eq:Kuramoto_equilibrium}
and compares the computational performance of this algorithm with other methods.
Section~\ref{sec:UpperBound} 
counts the number of equilibria in particular
cases and compares the maximum number of 
equilibria with the generic root count of \eqref{eq:Kuramoto_poly}.
A short conclusion is provided in Section~\ref{sec:Conclusion}.

\section{Decoupled reformulation}
\label{sec:Reduction}
One approach to solving a multivariate system of equations involves first decoupling the system. In this section, we take this approach and decouple the  system   \eqref{eq:Kuramoto_equilibrium}. A standard method for decoupling is to apply computational tools from elimination theory (i.e., multivariate resultants and
Gr{\"o}bner basis techniques~\cite{Buchberger1965, CE93, ZC2017, Cox2005, Cox2015, Emiris1999, Macaulay1902, Sturmfels1991}) to the  polynomial system~\eqref{eq:Kuramoto_poly}, thereby 
obtaining  several univariate polynomials, say $f$ and  $g_{1},\ldots,g_{2n}$, 
such that each solution of \eqref{eq:Kuramoto_poly} is the value of the polynomial system $g = (g_1,\dots,g_{2n})$ evaluated 
at a  root of $f$. 
However, the major drawback of this approach is that the obtained polynomials $f$ and $g_1,\dots,g_{2n}$ are of  very high degree (exponential in~$n$) with no naturally discernible structure. Thus solving with this method is very time-consuming, even for relatively small $n$.
In the following, we will instead use an alternate method to decouple the system \eqref{eq:Kuramoto_equilibrium} adapted from Kuramoto's approach \cite[\S 5.4]{kuramoto1975, kuramoto1984}. This method allows us to exploit  the inherent structure of the equations to obtain explicit radical expressions that are quickly solvable by standard univariate solvers.

Using \eqref{eq:model_equiv}, it is easy to see that
$$\sum_{\nu=1}^n \frac{d\theta_\nu}{dt} = \sum_{\nu = 1}^n \omega_\nu$$
so that equilibria can only exist when $\sum_{\nu = 1}^n \omega_\nu = 0$.  Therefore, we only need to 
consider solving such cases,
which we list as our first input condition~(IC):
\begin{enumerate}
\item[\textbf{{IC1}:}] $\omega_{1}+\cdots+\omega_{n}=0$.
\end{enumerate}

If $\theta = (\theta_1,\dots,\theta_n)$ is a solution of  \eqref{eq:Kuramoto_equilibrium},
then shifting all angles by $\phi$, i.e., $(\theta_1+\phi,\dots,\theta_n+\phi)$,
is also a solution.  Thus, we want to both 
compute and count equilibria modulo shift.  
One approach, e.g., as used in~\cite{mehta2015algebraic},
is to set one of the angles, say $\theta_n$, to be zero.
A second approach is to fix the ``weighted average angle'' 
which is the following output condition (OC):

\begin{enumerate}
\item[\textbf{{OC1}:}] $\sum_{\mu=1}^{n}k_{\mu}e^{i\theta_{\mu
}}\in \bR_{\geq0}$ 
\end{enumerate}
In paritcular, \textbf{OC1} is equivalent to selecting $\theta$
so that $\sum_{\mu=1}^n k_\mu \sin(\theta_\mu) = 0$
and $\sum_{\mu=1}^n k_\mu \cos(\theta_\mu) \geq 0$.
It is a natural  extension of the condition used 
by Kuramoto \cite[\S 5.4]{kuramoto1975, kuramoto1984}. 

When each $\omega_i = 0$, the following shows
that there can be infinitely many equilibria modulo~shift.

\begin{example}\label{ex:InfManyEq}
For $n = 4$ with $\omega_i = 0$ and $k_i = 1$, \eqref{eq:Kuramoto_equilibrium} is equivalent to
$$\begin{array}{ccccc}
\sin(\theta_1-\theta_2) + \sin(\theta_1-\theta_3) + \sin(\theta_1-\theta_4) &=& 
\sin(\theta_2-\theta_1) + \sin(\theta_2-\theta_3) + \sin(\theta_2-\theta_4) &=& \\ 
\sin(\theta_3-\theta_1) + \sin(\theta_3-\theta_2) + \sin(\theta_3-\theta_4) &=& 
\sin(\theta_4-\theta_1) + \sin(\theta_4-\theta_2) + \sin(\theta_4-\theta_3) &=& 0.
\end{array}$$
This system has infinitely many equilibria modulo shift
which can be seen, for example, by taking
$$\theta_3 = \theta_1 + \pi \hbox{~~and~~} \theta_4 = \theta_2 + \pi.$$
\end{example}

Since we aim to enumerate all
equilibria modulo shift, we will not consider the case when 
every~$\omega_i = 0$ and will leave this positive-dimensional case 
as a possible future research direction.
This forms our second input condition:
\begin{enumerate}
\item[\textbf{{IC2}:}] $\omega\neq\left(  0,\ldots,0\right) $
\end{enumerate}

Finally, we want to consider rank-one coupled
Kuramoto models that are {\em fully coupled}, i.e., $k_\nu > 0$
for $\nu = 1,\dots,n$, so that each oscillator is positively
impacted by every other oscillator.  This is a natural 
extension of the classical Kuramoto model \eqref{eq:Kuramoto} 
for which the uniform coupling strength~$K$ is positive.
With this assumption, we can, without loss of generality,
adjust the indexing to order the input parameters
based on $|\omega_\nu/k_\nu|$.  This forms our third input condition:
\begin{enumerate}
\item[\textbf{IC3}:] $k_{1},\ldots,k_{n}>0$ which
are ordered so that 
$\left| \dfrac{\omega_{1}}{k_{1}}\right| \leq\left|
\dfrac{\omega_{2}}{k_{2}}\right| \leq\cdots\leq\left| \dfrac{\omega_{n}}{k_{n}}\right|$
\end{enumerate}

With this setup, we are now ready to 
decouple the multivariate system of equations~\eqref{eq:Kuramoto_equilibrium}.

\begin{theorem}
[Decoupled Reformulation]\label{thm: reduction}
Suppose that $\omega\in\mathbb{R}^{n}$ and
$k\in\mathbb{R}_{>0}^{n}$ 
satisfy {\rm \textbf{IC1}, \textbf{IC2}, and~\textbf{IC3}}.
If~$\Theta_{\omega,k}$ is the set of all equilibria
described via {\rm \textbf{OC1}}
satisfying \eqref{eq:Kuramoto_equilibrium}, then
\begin{align*}
\Theta_{\omega,k}  &  =%
{\displaystyle\bigcup\limits_{\sigma\in\left\{  -1,+1\right\}  ^{n}}}
\Theta_{\omega,k,\sigma}  \hbox{~~~~~~~~~~where~} \\
\Theta_{\omega,k,\sigma}  &  =%
{\displaystyle\bigcup\limits_{R\in\mathcal{R}_{\omega,k,\sigma}}}
\left\{  \theta\in(-\pi,\pi]^{n}\ :\ \ 
\sin\theta_{\nu}=\frac{\omega_{\nu}}{k_{\nu}\sqrt{R}%
}\ \ \hbox{\rm and}\ \ \operatorname*{sign}\cos\theta_{\nu}=\sigma_{\nu}
\hbox{\rm~~~for~} \nu = 1,\dots,n\right\},
\\
\mathcal{R}_{\omega,k,\sigma}  &  =\left\{  R\in\mathbb{R}_{>0}\ :\ \ R=\frac
{1}{n}\sum_{\mu=1}^{n}\sigma_{\mu}\sqrt{k_{\mu}^{2}R-\omega_{\mu}^{2}%
}\right\}.
\end{align*}

\end{theorem}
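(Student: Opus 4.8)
The plan is to establish the claimed set equality by starting from the trigonometric equilibrium equations \eqref{eq:Kuramoto_equilibrium} together with \textbf{OC1}, and extracting a single scalar quantity $R$ that governs everything. First I would introduce the complex ``order parameter'' $z = \frac{1}{n}\sum_{\mu=1}^n k_\mu e^{i\theta_\mu}$, which by \textbf{OC1} is a nonnegative real number; write $z = r$ with $r \ge 0$. The key observation is that the right-hand side of \eqref{eq:Kuramoto_equilibrium} can be rewritten using the sine-subtraction identity as $\frac{1}{n}\sum_\mu k_\nu k_\mu (\sin\theta_\nu \cos\theta_\mu - \cos\theta_\nu \sin\theta_\mu) = k_\nu\left(\sin\theta_\nu \cdot \frac{1}{n}\sum_\mu k_\mu\cos\theta_\mu - \cos\theta_\nu \cdot \frac{1}{n}\sum_\mu k_\mu\sin\theta_\mu\right)$. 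Under \textbf{OC1} the second inner sum vanishes and the first equals $r$, so \eqref{eq:Kuramoto_equilibrium} collapses to the decoupled scalar relations
\begin{equation*}
\omega_\nu = k_\nu\, r\, \sin\theta_\nu, \qquad \nu = 1,\dots,n.
\end{equation*}
So each $\theta_\nu$ is essentially determined by $\sin\theta_\nu = \omega_\nu/(k_\nu r)$ once $r$ is known, with a two-fold ambiguity resolved by the sign of $\cos\theta_\nu$, which I encode as $\sigma_\nu \in \{-1,+1\}$ (the case $\cos\theta_\nu = 0$ is a boundary subtlety to handle separately). Setting $R = r^2$ turns $\sin\theta_\nu = \omega_\nu/(k_\nu\sqrt{R})$ into exactly the defining relation of $\Theta_{\omega,k,\sigma}$, and also yields $k_\nu\cos\theta_\nu = \sigma_\nu\sqrt{k_\nu^2 R - \omega_\nu^2}$, provided the radicand is nonnegative.

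Next I would close the loop by using the consistency condition for $r$ itself: since $r = \frac{1}{n}\sum_\mu k_\mu\cos\theta_\mu$ (the real part of $z$, the imaginary part being zero by \textbf{OC1}), substituting $k_\mu\cos\theta_\mu = \sigma_\mu\sqrt{k_\mu^2 R - \omega_\mu^2}$ gives $r = \frac{1}{n}\sum_\mu \sigma_\mu\sqrt{k_\mu^2 R - \omega_\mu^2}$, i.e. $\sqrt{R} = \frac{1}{n}\sum_\mu \sigma_\mu \sqrt{k_\mu^2 R - \omega_\mu^2}$. Squaring — or rather, multiplying through and keeping track — I need to arrive at precisely $R = \frac{1}{n}\sum_\mu \sigma_\mu\sqrt{k_\mu^2 R - \omega_\mu^2}$, which is the definition of $\mathcal{R}_{\omega,k,\sigma}$. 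Here I should be careful: the stated equation for $\mathcal{R}_{\omega,k,\sigma}$ has $R$ on the left, not $\sqrt{R}$, so I must check that the correct scaling falls out — presumably because the order parameter appears with a factor that I will track precisely, or because the model's coupling $\frac{1}{n}k_\nu k_\mu$ contributes an extra $k_\nu$ that I glossed above; in any case the bookkeeping of which power of $r$ appears where is something to nail down rather than wave at. For the forward inclusion I would also need to check that $R > 0$: this is where \textbf{IC2} ($\omega \ne 0$) enters — if $r = 0$ then all $\omega_\nu = 0$, contradicting \textbf{IC2}, so genuine equilibria under \textbf{OC1} force $R > 0$, matching the domain $\mathbb{R}_{>0}$ in the theorem.

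For the reverse inclusion I would run the argument backwards: given $\sigma$ and $R \in \mathcal{R}_{\omega,k,\sigma}$, the condition $R > 0$ and $R \in \mathcal{R}_{\omega,k,\sigma}$ should force each radicand $k_\mu^2 R - \omega_\mu^2 \ge 0$ — this is a nontrivial point that I expect to lean on \textbf{IC3} (the ordering by $|\omega_\nu/k_\nu|$) and the structure of the fixed-point equation, and is a candidate for the main obstacle. Granting that, define $\theta_\nu \in (-\pi,\pi]$ by $\sin\theta_\nu = \omega_\nu/(k_\nu\sqrt R)$ and $\operatorname{sign}\cos\theta_\nu = \sigma_\nu$ (well-defined and unique in $(-\pi,\pi]$ because fixing the sine and the sign of the cosine pins down the angle, modulo the $\cos = 0$ edge case); then reverse the computations above to verify that $z = \sqrt R \ge 0$ (so \textbf{OC1} holds) and that \eqref{eq:Kuramoto_equilibrium} is satisfied. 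The union over all $\sigma \in \{-1,+1\}^n$ then captures every equilibrium, because every equilibrium $\theta$ determines its sign vector $\sigma_\nu = \operatorname{sign}\cos\theta_\nu$. The main obstacles I anticipate are: (i) correctly tracking the power of the order parameter so that the fixed-point equation comes out as $R = \cdots$ rather than $\sqrt R = \cdots$; (ii) proving nonnegativity of all radicands for $R \in \mathcal{R}_{\omega,k,\sigma}$, which is the crux that makes the $\theta_\nu$ well-defined in the reverse direction; and (iii) the degenerate $\cos\theta_\nu = 0$ cases, where $\operatorname{sign} 0$ must be given a convention consistent with both unions.
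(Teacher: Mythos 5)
Your proposal follows essentially the same route as the paper's proof: introduce the order parameter $r=\frac{1}{n}\sum_{\mu}k_\mu e^{i\theta_\mu}$, use \textbf{OC1} and \textbf{IC1} to collapse the system to $\omega_\nu = rk_\nu\sin\theta_\nu$ with $r=\frac{1}{n}\sum_{\mu}k_\mu\cos\theta_\mu>0$ (positivity from \textbf{IC2}), and encode the cosine signs by $\sigma$. The scaling issue you flag resolves exactly as you suspected: since $k_\mu\cos\theta_\mu=\sigma_\mu\sqrt{k_\mu^2R-\omega_\mu^2}\,/\sqrt{R}$, multiplying the consistency equation $r=\frac{1}{n}\sum_{\mu}k_\mu\cos\theta_\mu$ through by $r=\sqrt{R}$ gives precisely $R=\frac{1}{n}\sum_{\mu}\sigma_\mu\sqrt{k_\mu^2R-\omega_\mu^2}$, and the radicand nonnegativity you worry about in the reverse direction is automatic, since the defining equation of $\mathcal{R}_{\omega,k,\sigma}$ only holds over the reals when every $k_\mu^2R-\omega_\mu^2\ge0$, which is the same condition as $|\sin\theta_\mu|\le1$.
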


\begin{proof}
For $U = (-\pi,\pi]^n$, we have 
\[
\Theta_{\omega,k}=\left\{  \theta\in U\ :\ \ \underset{\nu\in\left\{
1,\ldots,n\right\}  }{\forall}\omega_{\nu}=\frac{1}{n}\sum_{\mu=1}^{n}k_{\nu
}k_{\mu}\sin\left(  \theta_{\nu}-\theta_{\mu}\right)  \ \ \text{and
}\ \mathbf{OC1}\right\}.
\]
Since $\sin\eta=\operatorname{Im}e^{i\eta}$,
\textbf{IC3} and factoring yields
\[
\Theta_{\omega,k}=\left\{  \theta\in U\ :\ \ \underset{\nu\in\left\{
1,\ldots,n\right\}  }{\forall}\ \omega_{\nu}=\frac{1}{n}k_{\nu}%
\operatorname{Im}e^{i\theta_{\nu}}\sum_{\mu=1}^{n}k_{\mu}e^{-i\theta_{\mu}%
}\ \ \text{and }\ \mathbf{OC1}\right\}.
\]
From $e^{-i\alpha}=\overline{e^{i\alpha}}$ and \textbf{IC3}, we have
\[
\Theta_{\omega,k}=\left\{  \theta\in U\ :\ \ \underset{\nu\in\left\{
1,\ldots,n\right\}  }{\forall}\ \omega_{\nu}=\frac{1}{n}k_{\nu}%
\operatorname{Im}e^{i\theta_{\nu}}\overline{\sum_{\mu=1}^{n}k_{\mu}%
e^{i\theta_{\mu}}}\ \ \text{and }\ \mathbf{OC1}\right\}.
\]
Since \textbf{OC1} is equivalent to $\underset{r\in\mathbb{R}_{\geq0}%
}{\exists}\ r=\frac{1}{n}\sum_{\mu=1}^{n}k_{\mu}e^{i\theta_{\mu}}$, we have
\[
\Theta_{\omega,k}=\left\{  \theta\in U\ :\ \ \underset{r\in\mathbb{R}_{\geq
0}}{\exists}\ \text{ }r=\frac{1}{n}\sum_{\mu=1}^{n}k_{\mu}e^{i\theta_{\mu}%
}\ \ \text{and\ }\underset{\nu\in\left\{  1,\ldots,n\right\}  }{\forall
}\ \omega_{\nu}=rk_{\nu}\operatorname{Im}e^{i\theta_{\nu}}\right\}.
\]
If $r=0,$ then $\omega=\left(  0,\ldots,0\right),$ contradicting
\textbf{IC2}. Hence, $r\neq0$ so that
\[
\Theta_{\omega,k}=\left\{  \theta\in U\ :\ \ \underset{r\in\mathbb{R}%
_{>0}}{\exists}\ \text{ }r=\frac{1}{n}\sum_{\mu=1}^{n}k_{\mu}e^{i\theta_{\mu}%
}\ \ \text{and\ }\underset{\nu\in\left\{  1,\ldots,n\right\}  }{\forall
}\ \omega_{\nu}=rk_{\nu}\sin\theta_\nu\right\}.
\]
From \textbf{IC1}, we have $0=\sum_{\mu=1}^{n}\omega_{\mu}=\sum_{\mu=1}%
^{n}rk_{\mu}\sin\theta_{\mu}=r\sum_{\mu=1}^{n}k_{\mu}\sin\theta_{\mu}.$ Since
$r\neq0$, we know $\sum_{\mu=1}^{n}k_{\mu}\sin\theta_{\mu}=0$. Thus,
\[
\Theta_{\omega,k}=\left\{  \theta\in U\ :\ \ \underset{r\in\mathbb{R}%
_{>0}}{\exists}\ \text{\ }r=\frac{1}{n}\sum_{\mu=1}^{n}k_{\mu}\cos\theta_{\mu
}\ \ \text{and \ }\underset{\nu\in\left\{  1,\ldots,n\right\}  }{\forall
}\ \omega_{\nu}=rk_{\nu}\sin\theta_{\nu}\right\}.
\]
Since $\cos\alpha=\pm\sqrt{1-\sin^{2}\alpha}$
and $\sin\theta_{\mu} = \dfrac{\omega_{\mu}}{k_{\mu}r}$, we have
\begin{align*}
\Theta_{\omega,k}  &  =%
{\displaystyle\bigcup\limits_{\sigma\in\left\{  -1,+1\right\}  ^{n}}}
\Theta_{\omega,k,\sigma} \hbox{~~~~~~where}\\
\hbox{\footnotesize $\Theta_{\omega,k,\sigma}$} &=
\hbox{\footnotesize $\left\{  \theta\in U\ :\ \ \underset{r\in
\mathbb{R}_{>0}}{\exists}\ \text{\ }r=\displaystyle\frac{1}{n}\sum_{\mu=1}^{n}k_{\mu}%
\sigma_{\mu}\sqrt{1-\left(  \frac{\omega_{\mu}}{k_{\mu}r}\right)  ^{2}%
}\ \ \text{and \ }\underset{\nu\in\left\{  1,\ldots,n\right\}  }{\forall}%
\sin\theta_{\nu}=\frac{\omega_{\nu}}{k_{\nu}r}\ \ \text{and\ }%
\ \operatorname*{sign}\cos\theta_{\nu}=\sigma_{\nu}\right\}$.}
\end{align*}
Since $r,k_\mu > 0$, we can simplify to
\[
\hbox{\small $
\Theta_{\omega,k,\sigma}=\left\{  \theta\in U\ :\ \ \underset{r\in
\mathbb{R}_{>0}}{\exists}\ \text{\ }r^{2}=
\displaystyle\frac{1}{n}\sum_{\mu=1}^{n}\sigma_{\mu}\sqrt{k_{\mu}^2 r^{2}-\omega_{\mu}^{2}%
}\ \ \text{and \ }\underset{\nu\in\left\{  1,\ldots,n\right\}  }{\forall}%
\sin\theta_{\nu}=\frac{\omega_{\nu}}{k_{\nu}r}\ \text{and\ }%
\ \operatorname*{sign}\cos\theta_{\nu}=\sigma_{\nu}\right\}$}.
\]
Since $r > 0$, for $R = r^2>0$, we have $r = \sqrt{R}$ 
yielding the result.
\end{proof}

\begin{remark}
The proof of Theorem~\ref{thm: reduction} shows
that we could update {\rm\textbf{OC1}} to be
$$\sum_{\mu=1}^{n}k_{\mu}e^{i\theta_{\mu}}\in \bR_{>0}$$
which yields that a unique representative is computed
for each equilibria modulo shift.  
In particular, if $\psi=(\psi_1,\dots,\psi_n)$ is such
that $\sum_{\mu=1}^{n}k_{\mu}e^{i\psi_{\mu}}\neq0$,
then there is a unique $\phi\in(-\pi,\pi]$ such that
$$\sum_{\mu=1}^{n}k_{\mu}e^{i(\psi_{\mu}+\phi)}
= e^{i\phi} \sum_{\mu=1}^{n}k_{\mu}e^{i\theta_{\mu}}\in \bR_{>0}.$$
\end{remark}

\section{Locating the equilibria}\label{sec:Algorithm}

Theorem~\ref{thm: reduction} immediately
yields an algorithm for locating all equilibria
satisfying~\eqref{eq:Kuramoto_equilibrium}.
After some improvements, we compare
the resulting method with other approaches.

\subsection{Basic algorithm}

For each $\sigma\in\{-1,+1\}^n$, the first step
to utilize Theorem~\ref{thm: reduction} 
for locating all equilibria is to find
the positive roots of
\begin{equation}
\label{eq:Univar}
f_{\sigma}(R) = -R + \frac{1}{n}\sum_{\mu=1}^{n}\sigma_{\mu}\sqrt{k_{\mu}%
        ^{2}R-\omega_{\mu}^{2}}.
\end{equation}
The following algorithm depends upon a root finding method 
that returns the set of all positive roots of $f_\sigma$,
denoted $\mathbf{Solve}(f_\sigma,+)$.  Our implementation 
uses an interval Newton method \cite[Chap.~6]{hammer1995c++}, which allows each positive root of $f_\sigma$ to be approximated
up to any given precision. 
For each positive root $R$ of $f_\sigma$,
the second step from Theorem~\ref{thm: reduction}
is to compute the equilibria~via
\[
\sin \theta_\nu = \frac{\omega_\nu}{k_\nu \sqrt{R}} 
\hbox{~~~and~~~}
\operatorname*{sign}\cos \theta_\nu = \sigma_\nu
\hbox{~~~for~}\nu = 1,\dots,n.
\]
This is summarized in the following algorithm.

\begin{algorithm}[Basic]\label{alg:Basic}\ 

\begin{description}
[leftmargin=3em,style=nextline,itemsep=0.5em]

\item[In:] $\omega\in\mathbb{R}^{n}$ and $k\in\mathbb{R}_{>0}^{n}$
satisfying \textbf{IC1}, \textbf{IC2}, and \textbf{IC3}.

\item[Out:] $\Theta$, the set of equilibria satisfying \textbf{OC1}.
\end{description}

\begin{enumerate}
\item $\Theta\leftarrow\{\}$


\item For $\sigma\in{\left\{  -1,+1\right\}  ^{n}}$ do

\begin{enumerate}
\item $f_\sigma\leftarrow-R + \frac{1}{n}\sum_{\mu=1}^{n}\sigma_{\mu}\sqrt{k_{\mu
}^{2}R-\omega_{\mu}^{2}}$

\item $\mathcal{R}\leftarrow\mathbf{Solve}(f_\sigma,+)$

\item For $R\in\mathcal{R}$ do

\begin{enumerate}
\item Compute $\theta\in(-\pi,\pi]^{n}$ such that $\sin\theta_{\nu}=\frac{\omega_{\nu}}{k_{\nu
}\sqrt{R}}\hbox{~and~}\mathrm{sign}\cos\theta_{\nu}=\sigma_{\nu}$
for $\nu = 1,\dots,n$.

\item Add $\theta$ to $\Theta$
\end{enumerate}
\end{enumerate}
\end{enumerate}
\end{algorithm}

\begin{example}\label{ex:Illustrative1}
To illustrate for $n = 2$, consider $\omega=( 4, -4 )$ and $k=( 5, 2 )$.
There are $4$ sign patterns~$\sigma$ to consider:
\begin{itemize}
\item $\sigma = (-1,-1):$
\begin{itemize}
\item[$\circ$] $f_\sigma(R) = -R + \frac{1}{2}\left(-\sqrt{25 R - 16} - \sqrt{4 R - 16} \right)$ has no positive roots.
\end{itemize}
\item $\sigma=(-1,+1):$
\begin{itemize}
\item[$\circ$] $f_\sigma(R) = -R + \frac{1}{2}\left(-\sqrt{25 R - 16} + \sqrt{4 R - 16} \right)$ has no positive roots.
\end{itemize}
\item $\sigma=(+1,-1):$
\begin{itemize}
\item[$\circ$] $f_\sigma(R) = -R + \frac{1}{2}\left(\sqrt{25 R - 16} - \sqrt{4 R - 16} \right)$ has one positive root, namely $R = 4.25$.
\item[$\circ$] This yields the equilibrium $\theta=(0.3985, -1.8158)$.
\end{itemize}
\item $\sigma=(+1,+1):$
\begin{itemize}
\item[$\circ$] $f_\sigma(R) = -R + \frac{1}{2}\left(\sqrt{25 R - 16} + \sqrt{4 R - 16} \right)$ has one positive root, namely $R = 10.25$.
\item[$\circ$] This yields the equilibrium $\theta=(0.2526, -0.6747)$.
\end{itemize}
\end{itemize}
In summary, there are two equilibria satisfying \eqref{eq:Kuramoto_equilibrium}.
\end{example}

\subsection{Optimizations}\label{sec:Optimizations}

In Algorithm~\ref{alg:Basic}, $\mathbf{Solve}(f_\sigma,+)$,
which computed all positive roots of $f_\sigma$,
was called for all $2^n$ sign patterns. This exponential scaling in the number of oscillators is not much better than the previous approaches discussed earlier. As such, the goal of this section is to prune out sign patterns $\sigma$ for which $f_\sigma$ as in \eqref{eq:Univar}
has no positive roots. This improvement allows for 
the optimized algorithm to essentially scale based 
on the number of equilibria, provided an extra condition is satisfied, yielding much shorter computation times.

Throughout this section, we assume 
$\omega\in\mathbb{R}^{n}$ and $k\in\mathbb{R}_{>0}^{n}$
satisfy \textbf{IC1} -- \textbf{IC3},
$\sigma \in\{-1,+1\}^n$, and~$f_\sigma$ as in \eqref{eq:Univar}.
With this setup, the following provides an interval containing all positive roots~of~$f_\sigma$.

\begin{proposition}
\label{pro:opt1} 
If $\sigma_{+} = \{\mu : \sigma_{\mu}=+1\}$,
then every positive root of 
$f_{\sigma}$ is contained in the interval
\[
\left[  \left(  \frac{\omega_{n}}{k_{n}} \right)^{2},\;\; \left(  \frac{1}{n}
\sum_{\mu\in\sigma_{+}}k_{\mu} \right)^{2}~ \right].
\]

\end{proposition}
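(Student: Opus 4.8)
The plan is to bound positive roots of $f_\sigma$ from both sides using the structural constraints imposed by \textbf{IC3} and the requirement that every square root appearing in $f_\sigma$ be real-valued at $R$. First I would observe that for $R$ to be a (real) positive root of $f_\sigma(R) = -R + \frac{1}{n}\sum_{\mu=1}^n \sigma_\mu \sqrt{k_\mu^2 R - \omega_\mu^2}$, each radicand must be nonnegative, i.e.\ $k_\mu^2 R \ge \omega_\mu^2$, equivalently $R \ge (\omega_\mu/k_\mu)^2$ for all $\mu$. By \textbf{IC3}, the largest of these lower bounds is $(\omega_n/k_n)^2$, which immediately gives the left endpoint $R \ge (\omega_n/k_n)^2$.

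For the upper bound, the key step is to discard the negative-sign terms and bound the positive-sign terms. Since $\sigma_\mu \sqrt{k_\mu^2 R - \omega_\mu^2} \le 0$ for $\mu \notin \sigma_+$, at a root $R$ we have
\[
R = \frac{1}{n}\sum_{\mu=1}^n \sigma_\mu \sqrt{k_\mu^2 R - \omega_\mu^2}
\le \frac{1}{n}\sum_{\mu\in\sigma_+} \sqrt{k_\mu^2 R - \omega_\mu^2}
\le \frac{1}{n}\sum_{\mu\in\sigma_+} k_\mu \sqrt{R} = \frac{\sqrt{R}}{n}\sum_{\mu\in\sigma_+} k_\mu,
\]
using $\sqrt{k_\mu^2 R - \omega_\mu^2} \le \sqrt{k_\mu^2 R} = k_\mu\sqrt{R}$ (here $k_\mu>0$ by \textbf{IC3} and $R>0$). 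Dividing through by $\sqrt{R} > 0$ yields $\sqrt{R} \le \frac{1}{n}\sum_{\mu\in\sigma_+} k_\mu$, and squaring gives $R \le \left(\frac{1}{n}\sum_{\mu\in\sigma_+} k_\mu\right)^2$, the right endpoint. Combining the two bounds places every positive root of $f_\sigma$ in the stated interval.

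I do not anticipate a genuine obstacle here; the argument is a short chain of elementary inequalities. The only point requiring a moment of care is the edge case $\sigma_+ = \emptyset$, where the sum is empty and the upper bound is $0$; then the left endpoint $(\omega_n/k_n)^2$ (which is positive by \textbf{IC2} together with \textbf{IC3}, since $\omega \neq 0$ forces $\omega_n \neq 0$) already exceeds the right endpoint, so the interval is empty and $f_\sigma$ has no positive root — consistent with the all-negative case having $f_\sigma < 0$ on its domain. It is worth noting explicitly in the write-up that the interval may be empty, which is exactly the pruning mechanism the subsequent optimization exploits.
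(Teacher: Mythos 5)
Your proof is correct and follows essentially the same route as the paper: the lower bound comes from requiring $k_\mu^2 R \ge \omega_\mu^2$ for every $\mu$ (the paper phrases this via $\sin^2\theta_\nu = \omega_\nu^2/(k_\nu^2 R) \le 1$ from Theorem~\ref{thm: reduction}, which is the same constraint), and the upper bound from dropping the negative-sign terms and bounding $\sqrt{k_\mu^2 R - \omega_\mu^2} \le k_\mu\sqrt{R}$. Your explicit remark about the possibly empty interval is a nice addition consistent with how the paper later uses this proposition for pruning.
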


\begin{proof}
Suppose that $R$ is a positive root of $f_{\sigma}$. 
Since each
$\frac{\omega_{\nu}^{2}}{k_{\nu}^{2} R}
= \sin^{2}\theta_{\nu} \leq 1$ by Theorem~\ref{thm: reduction},
\[
R\geq\left(  \frac{\omega_{\nu}}{k_{\nu}} \right) ^{2} \hbox{~~~~for~}\nu=1,\ldots,n.
\]
Hence, \textbf{IC3} shows that $R\geq\left(  \dfrac{\omega_{n}}{k_{n}} \right) ^{2}$.

Moreover,
\[
0 \leq\sqrt{k_{\mu}^{2} R - \omega_{\mu}^{2}} \leq k_{\mu}\sqrt{R}.
\]
For $\sigma_{-} = \{ \mu:\sigma_{\mu
}=-1 \}$, we have
\begin{align*}
R  &  = \frac{1}{n}\sum_{\mu=1}^{n}\sigma_{\mu}\sqrt{k_{\mu}^{2} R -
\omega_{\mu}^{2}}\\
&  = \left(  \frac{1}{n} \sum_{\mu\in\sigma_{+}}\sqrt{k_{\mu}^{2} R -
\omega_{\mu}^{2}} \right)  -\left(  \frac{1}{n}\sum_{\mu\in\sigma_{-}}%
\sqrt{k_{\mu}^{2} R - \omega_{\mu}^{2}} \right) \\
&  \leq\frac{1}{n}\sum_{\mu\in\sigma_{+}}k_{\mu}\sqrt{R}.
\end{align*}
This is equivalent to
$R\leq\left(  \frac{1}{n}\sum_{\mu\in\sigma_{+}}k_{\mu} \right) ^{2}$.
\end{proof}

\begin{example}\label{ex:Illustrative2}
With the setup from Ex.~\ref{ex:Illustrative1}, we consider the four cases:
\begin{itemize}
\item $\sigma = (-1,-1):$
\begin{itemize}
\item[$\circ$] no positive roots since Prop.~\ref{pro:opt1} provides the ``interval'' $[4,0]$.
\end{itemize}
\item $\sigma=(-1,+1):$
\begin{itemize}
\item[$\circ$] no positive roots since Prop.~\ref{pro:opt1} provides
the ``interval'' $[4,1]$.
\end{itemize}
\item $\sigma=(+1,-1):$
\begin{itemize}
\item[$\circ$] Prop.~\ref{pro:opt1} provides the interval $[4,6.25]$, which contains the positive root $R = 4.25$.
\end{itemize}
\item $\sigma=(+1,+1):$
\begin{itemize}
\item[$\circ$] Prop.~\ref{pro:opt1} provides the interval $[4,12.25]$, which contains the positive root $R = 10.25$.
\end{itemize}
\end{itemize}
\end{example}

As shown in Ex.~\ref{ex:Illustrative2}, Prop.~\ref{pro:opt1}
can exclude sign patterns $\sigma$ for which 
$f_\sigma$ has no positive roots.  The following provides
another such test.

\begin{proposition}
\label{pro:opt2} If, for all $\ell=1,2,\ldots,n$,
\begin{equation}\label{eq:partialsum}
s_{\ell} = \sum_{\mu=1}^{\ell}\sigma_{\mu}k_{\mu}\leq0,
\end{equation}
then $f_{\sigma}$ has no positive roots.
\end{proposition}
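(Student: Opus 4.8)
The plan is to prove the stronger statement that $f_\sigma(R) < 0$ for \emph{every} $R > 0$ at which $f_\sigma$ is defined; this rules out positive roots at once. Note that $f_\sigma(R)$ is real only when $k_\mu^2 R - \omega_\mu^2 \geq 0$ for all $\mu$, i.e.\ when $R \geq (\omega_\mu / k_\mu)^2$ for every $\mu$, which by \textbf{IC3} amounts to $R \geq (\omega_n/k_n)^2$. So fix such an $R > 0$.

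The main idea is a summation-by-parts (Abel) argument keyed to the partial sums $s_\ell$. First I would introduce the normalized quantities
$$a_\mu \;=\; \frac{1}{k_\mu}\sqrt{k_\mu^2 R - \omega_\mu^2} \;=\; \sqrt{R - \left(\tfrac{\omega_\mu}{k_\mu}\right)^2} \;\geq\; 0,$$
and observe that the ordering in \textbf{IC3} forces $a_1 \geq a_2 \geq \cdots \geq a_n \geq 0$, since $a_\mu$ is a decreasing function of $(\omega_\mu/k_\mu)^2$. Writing $\sigma_\mu\sqrt{k_\mu^2 R - \omega_\mu^2} = (\sigma_\mu k_\mu)\, a_\mu$ and applying Abel's identity with $s_\ell = \sum_{\mu=1}^\ell \sigma_\mu k_\mu$ (and $s_0 = 0$),
$$\sum_{\mu=1}^n \sigma_\mu \sqrt{k_\mu^2 R - \omega_\mu^2} \;=\; \sum_{\ell=1}^{n-1} s_\ell\,(a_\ell - a_{\ell+1}) \;+\; s_n\, a_n.$$
Now each factor $a_\ell - a_{\ell+1}$ is $\geq 0$ by monotonicity, $a_n \geq 0$, and the hypothesis gives $s_\ell \leq 0$ for every $\ell$; hence the right-hand side is $\leq 0$. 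Consequently
$$f_\sigma(R) \;=\; -R + \frac{1}{n}\sum_{\mu=1}^n \sigma_\mu \sqrt{k_\mu^2 R - \omega_\mu^2} \;\leq\; -R \;<\; 0,$$
so $f_\sigma$ has no positive root.

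I do not anticipate a genuine obstacle: the only points needing care are (i) checking that \textbf{IC3} indeed yields the monotonicity $a_1 \geq \cdots \geq a_n$, which is immediate, and (ii) getting the boundary term in Abel's identity right (it is $s_n a_n$, with the $s_0 a_1$ term vanishing because $s_0 = 0$). Everything else is a direct sign chase, and the argument is essentially complete once the sum is reorganized so that the hypothesis $s_\ell \leq 0$ can be applied term by term.
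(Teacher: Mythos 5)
Your proof is correct and is essentially the paper's argument: the paper carries out the same summation-by-parts implicitly, as an iterative chain of inequalities starting from $0 \geq s_n\sqrt{R-(\omega_n/k_n)^2}$ and repeatedly using $s_\ell \leq 0$ together with the monotonicity $\sqrt{R-(\omega_1/k_1)^2} \geq \cdots \geq \sqrt{R-(\omega_n/k_n)^2} \geq 0$ to reach the contradiction $0 \geq nR$. Your explicit Abel identity is just a cleaner packaging of that telescoping, phrased as a direct proof that $f_\sigma(R) \leq -R < 0$ on its whole domain rather than as a contradiction.
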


\begin{proof}
Suppose that $R$ is a positive root of $f_\sigma$.
Then, by Prop.~\ref{pro:opt1} and \textbf{IC3},
$$R \geq\left(  \dfrac{\omega_{n}}{k_{n}} \right) ^{2}
\geq \left(  \dfrac{\omega_{n-1}}{k_{n-1}} \right) ^{2}
\geq \cdots \geq \left(  \dfrac{\omega_{1}}{k_{1}} \right) ^{2}$$ so that 
\[
\sqrt{R - \left(  \frac{\omega_{1}}{k_{1}} \right) ^{2} } \geq\cdots\geq
\sqrt{R - \left(  \frac{\omega_{n}}{k_{n}} \right) ^{2} } \geq 0.
\]
Thus, for every $\ell = 2,\dots,n$, we have
$s_\ell = s_{\ell-1} + \sigma_{\ell}k_\ell\leq0$ by definition and
$$
 0 \geq s_\ell\cdot \sqrt{R-\left(\frac{\omega_\ell}{k_\ell}\right)^2} \geq 
s_\ell\cdot \sqrt{R-\left(\frac{\omega_{\ell-1}}{k_{\ell-1}}\right)^2}.
$$
Hence, combining with $f_\sigma(R) = 0$, we have
\begin{align*}
0 &\geq  s_n\cdot \sqrt{R - \left(  \frac
{\omega_{n}}{k_{n}} \right) ^{2} } \\
&= (s_{n-1} + \sigma_n k_n)\cdot \sqrt{R - \left(  \frac
{\omega_{n}}{k_{n}} \right) ^{2} } \\
&= s_{n-1}\cdot \sqrt{R - \left(  \frac
{\omega_{n}}{k_{n}} \right) ^{2} } + \sigma_{n} k_{n}\cdot \sqrt{R - \left(
\frac{\omega_{n}}{k_{n}} \right) ^{2}}\\
&\geq s_{n-1}\cdot \sqrt{R - \left(  \frac
{\omega_{n-1}}{k_{n-1}} \right) ^{2} } + \sigma_{n} k_{n}\cdot \sqrt{R - \left(
\frac{\omega_{n}}{k_{n}} \right) ^{2}}\\
&= (s_{n-2} + \sigma_{n-1}k_{n-1})\cdot \sqrt{R - \left(  \frac
{\omega_{n-1}}{k_{n-1}} \right) ^{2} } 
+ \sigma_{n} k_{n}\cdot \sqrt{R - \left(
\frac{\omega_{n}}{k_{n}} \right) ^{2}}\\
& \geq s_{n-2} \cdot \sqrt{R - \left(  \frac
{\omega_{n-2}}{k_{n-2}} \right) ^{2} }  + \sum_{\mu = n-1}^n \sigma_\mu k_\mu \cdot \sqrt{R-\left(\frac{\omega_\mu}{k_\mu}\right)^2}\\
& \vdots  \\
&\geq  \sum_{\mu=1}^{n}\sigma_{\mu}k_{\mu}\sqrt{R - \left( \frac{\omega_{\mu
}}{k_{\mu}} \right) ^{2}} \\
& = nR.
\end{align*}
This is a contradiction since $R > 0$.
\end{proof}

\begin{example}\label{ex:Illustrative3}
With the setup from Ex.~\ref{ex:Illustrative1}, Prop.~\ref{pro:opt2} shows that $f_\sigma$ can have no positive
roots for $\sigma = (-1,-1)$ and $\sigma=(-1,+1)$.
\end{example}

The following will be used to show additional conditions 
for which $f_\sigma$ has no positive roots.

\begin{lemma}
\label{lem:opt1} $f_{\sigma}$ has no positive roots if and only if $f_{\sigma} < 0$
on $\left[  \left( \dfrac{\omega_{n}}{k_{n}}\right) ^{2}, \infty\right) $.
\end{lemma}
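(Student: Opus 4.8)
The plan is to carry out both implications on the natural domain $D = [\,(\omega_n/k_n)^2,\infty)$ of $f_\sigma$, i.e., the set of $R$ for which every radicand $k_\mu^2 R - \omega_\mu^2$ is nonnegative (so that $f_\sigma(R)$ is a well-defined real number and $f_\sigma$ is continuous, including one-sidedly at the left endpoint). By \textbf{IC3} this set is exactly the stated half-line, since $(\omega_n/k_n)^2 = \max_\mu (\omega_\mu/k_\mu)^2$. With this in hand the backward implication is essentially immediate: if $f_\sigma < 0$ throughout $D$, then $f_\sigma$ has no root in $D$ at all, and any positive root must lie in $D$ — indeed, by Prop.~\ref{pro:opt1} every positive root lies in $[\,(\omega_n/k_n)^2,\,(\tfrac1n\sum_{\mu\in\sigma_+}k_\mu)^2\,]\subseteq D$ — so there are no positive roots.

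For the forward implication I would first record that $(\omega_n/k_n)^2 > 0$: by \textbf{IC2} some $\omega_\mu\neq 0$, and by the ordering in \textbf{IC3} we get $|\omega_n/k_n|\geq|\omega_\mu/k_\mu| > 0$. Hence every element of $D$ is strictly positive, so ``$f_\sigma$ has no positive root'' is the same statement as ``$f_\sigma$ has no root in $D$.''

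Next I would analyze the sign of $f_\sigma$ at the far end of $D$. As $R\to\infty$, each term $\sigma_\mu\sqrt{k_\mu^2 R-\omega_\mu^2}$ grows only like $O(\sqrt R)$, which is dominated by the $-R$ term, so $f_\sigma(R)\to-\infty$; in particular $f_\sigma$ is negative for all sufficiently large $R\in D$. Now assume for contradiction that $f_\sigma(R_0)\geq 0$ for some $R_0\in D$. Since by hypothesis $f_\sigma$ has no root in $D$, we in fact have $f_\sigma(R_0)>0$; combining this with the negativity of $f_\sigma$ for large $R$ and the continuity of $f_\sigma$ on $D$, the intermediate value theorem yields a root of $f_\sigma$ in $D$, a contradiction. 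Therefore $f_\sigma<0$ on all of $D$, which is precisely the claim.

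I do not anticipate a serious obstacle here; the argument is short. The only points needing care are (i) correctly identifying $D$ as the domain of $f_\sigma$ and noting its continuity up to the endpoint $R=(\omega_n/k_n)^2$, and (ii) invoking \textbf{IC2} and \textbf{IC3} to ensure $(\omega_n/k_n)^2>0$, so that a root in $D$ is automatically a positive root. The one substantive ingredient is the asymptotic behavior $f_\sigma(R)\to-\infty$ as $R\to\infty$: it is what forces the alternative ``$f_\sigma<0$ on $D$'' rather than ``$f_\sigma>0$ on $D$'' once positive roots are excluded, and hence what makes the equivalence go through.
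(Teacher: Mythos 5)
Your proof is correct and follows essentially the same route as the paper: the backward direction via Prop.~\ref{pro:opt1} locating all positive roots inside the half-line, and the forward direction via $f_\sigma(R)\to-\infty$ together with continuity and the intermediate value theorem. You are in fact slightly more careful than the paper in noting explicitly that \textbf{IC2} and \textbf{IC3} force $(\omega_n/k_n)^2>0$, so that a root on the half-line is automatically a positive root.
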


\begin{proof}
Let $I = \left[  \left( \dfrac{\omega_{n}}{k_{n}}\right) ^{2}, \infty\right) $. If $R \in I$, then
\begin{align*}
f_{\sigma}(R)  &  = -R + \frac{1}{n}\sum_{\mu=1}^{n}\sigma_{\mu} \sqrt{
k_{\mu}^{2} R - \omega_{\mu}^{2} }\\
&  \leq-R + \frac{1}{n}\sum_{\mu=1}^{n} \sqrt{ k_{\mu}^{2} R - \omega_{\mu
}^{2} }\\
&  \leq-R + \frac{1}{n}\sum_{\mu=1}^{n} k_{\mu}\sqrt{R}
\end{align*}
so that
\[
\lim\limits_{R\rightarrow\infty} f_{\sigma}(R) = -\infty.
\]
Since $f_{\sigma}$ is continuous on $I$,
we must have $f_{\sigma} < 0$ on $I$ when $f_\sigma$ has no positive roots. Furthermore, the interval $\left[  \left(  \frac{\omega_{n}}{k_{n}} \right)^{2},\;\; \left(  \frac{1}{n}
\sum_{\mu\in\sigma_{+}}k_{\mu} \right)^{2}~ \right]$ contains all the positive roots of $f_{\sigma}$ by Prop.~\ref{pro:opt1} and is contained in $I$. Hence, if $f_{\sigma} < 0$ on $I$, then $f_{\sigma}$ has no positive roots.
\end{proof}

If $f_\sigma$ has no positive roots, the following
yields additional cases which also have no~positive~roots.

\begin{lemma}
\label{pro:opt3} Let $\mu$ be such that $\sigma_{\mu}=+1$. 
Let $\sigma^{\prime}$ such that 
$\sigma_{\mu}^{\prime}=-1$ and 
$\sigma^\prime_\nu = \sigma_\nu$ for $\nu \neq \mu$.
If~$f_{\sigma}$ has no positive roots, 
then $f_{\sigma^{\prime}}$ also has no positive roots.
\end{lemma}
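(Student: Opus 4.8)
The plan is to reduce everything to Lemma~\ref{lem:opt1} by observing that $f_\sigma$ and $f_{\sigma'}$ are identical except in their $\mu^{\rm th}$ summand. Concretely, for every $R$ at which both are defined we have
\[
f_{\sigma'}(R) = f_\sigma(R) - \frac{2}{n}\sqrt{k_\mu^2 R - \omega_\mu^2},
\]
since changing $\sigma_\mu = +1$ to $\sigma'_\mu = -1$ flips the sign of exactly that term while leaving all other terms (and the $-R$) untouched.

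Next I would invoke Lemma~\ref{lem:opt1} for $\sigma$: the hypothesis that $f_\sigma$ has no positive roots is equivalent to $f_\sigma < 0$ on $I = \left[\left(\frac{\omega_n}{k_n}\right)^2,\infty\right)$. The point of working on $I$ rather than on the bare positive reals is that $I$ is exactly where all the square roots appearing in $f_\sigma$ (and in $f_{\sigma'}$) are simultaneously real: by \textbf{IC3}, for any index $\mu$ one has $R \geq \left(\frac{\omega_n}{k_n}\right)^2 \geq \left(\frac{\omega_\mu}{k_\mu}\right)^2$, hence $k_\mu^2 R - \omega_\mu^2 \geq 0$ on $I$. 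In particular $\sqrt{k_\mu^2 R - \omega_\mu^2} \geq 0$ there, so the displayed identity gives $f_{\sigma'}(R) \leq f_\sigma(R) < 0$ for all $R \in I$.

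Finally I would apply Lemma~\ref{lem:opt1} in the other direction, now to $\sigma'$: since $f_{\sigma'} < 0$ on $I$, the lemma yields that $f_{\sigma'}$ has no positive roots, which is the claim. (Note that Lemma~\ref{lem:opt1} applies to $\sigma'$ even though $\sigma'_+ = \sigma_+ \setminus \{\mu\}$ may be strictly smaller, because the interval $I$ in that lemma depends only on $\omega_n/k_n$, not on the sign pattern.) There is no real obstacle here; the only thing to be careful about is that the two functions are being compared only on the common domain $I$ where both are real-valued, which is precisely why routing the argument through Lemma~\ref{lem:opt1} rather than comparing $f_\sigma$ and $f_{\sigma'}$ pointwise on all of $(0,\infty)$ is the clean way to proceed.
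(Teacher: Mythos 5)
Your proposal is correct and follows essentially the same route as the paper: both arguments use Lemma~\ref{lem:opt1} in each direction together with the pointwise inequality $f_{\sigma'} \leq f_{\sigma}$ on $\left[\left(\omega_n/k_n\right)^2,\infty\right)$. You simply make explicit the difference formula $f_{\sigma'}(R) = f_\sigma(R) - \tfrac{2}{n}\sqrt{k_\mu^2 R - \omega_\mu^2}$ and the nonnegativity of the radicand on that interval, which the paper leaves implicit.
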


\begin{proof}
Since $f_{\sigma}$ has no positive roots, 
Lemma~\ref{lem:opt1} shows that $f_\sigma < 0$
on $\left[  \left(
\dfrac{\omega_{n}}{k_{n}} \right) ^{2}, \infty\right) $.
Since $f_{\sigma^{\prime}} \leq f_{\sigma}$
on $\left[  \left(
\dfrac{\omega_{n}}{k_{n}} \right) ^{2}, \infty\right) $,
$f_{\sigma^{\prime}}$ also does not have any positive roots by Lemma~\ref{lem:opt1}.
\end{proof}

\begin{example}\label{ex:Illustrative4}
With the setup from Ex.~\ref{ex:Illustrative1}, 
since $f_\sigma$ for $\sigma = (-1,+1)$ has no positive roots,
$f_{\sigma'}$ also has no positive roots for $\sigma' = (-1,-1)$.
\end{example}

The following excludes additional cases by swapping entries of $\sigma$.

\begin{lemma}
\label{pro:opt4} Suppose $\mu$ and $\nu$ are such that $\sigma_{\mu}=+1$ and
$\sigma_{\nu}=-1$. Let $\sigma^{\prime}$ be the same as $\sigma$ except that
$\sigma_{\mu}^{\prime}=\sigma_\nu = -1$ and $\sigma_{\nu}^{\prime}=\sigma_\mu = +1$. If $f_{\sigma}$ has
no positive roots where
\begin{equation}\label{eq:Prop4}
\left(  k_{\mu}^{2}-k_{\nu}^{2} \right) \left(  \frac{\omega_{n}}{k_{n}}
\right) ^{2} \geq\omega_{\mu}^{2} - \omega_{\nu}^{2} \hbox{~~~~and~~~~} \left(
k_{\mu}^{2}-k_{\nu}^{2} \right) \left(  \frac{1}{n}\sum_{\iota=1}^{n}k_{\iota}
\right) ^{2} \geq\omega_{\mu}^{2} - \omega_{\nu}^{2},
\end{equation}
then $f_{\sigma^{\prime}}$ also has no positive roots.
\end{lemma}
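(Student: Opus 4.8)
The plan is to follow the template of the proof of Lemma~\ref{pro:opt3}: reduce to the characterization in Lemma~\ref{lem:opt1} and compare $f_{\sigma'}$ with $f_{\sigma}$ on a suitable interval. Write $a = \left(\frac{\omega_n}{k_n}\right)^2$; by \textbf{IC3} every radicand $k_\iota^2 R - \omega_\iota^2$ appearing in \eqref{eq:Univar} is nonnegative for $R \ge a$, so both $f_\sigma$ and $f_{\sigma'}$ are well-defined and continuous on $[a,\infty)$.

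First I would compute the difference. Since $\sigma'$ differs from $\sigma$ only in that the $\mu$-th sign flips from $+1$ to $-1$ and the $\nu$-th from $-1$ to $+1$,
\[
f_{\sigma'}(R) - f_{\sigma}(R) = \frac{2}{n}\left(\sqrt{k_\nu^2 R - \omega_\nu^2} - \sqrt{k_\mu^2 R - \omega_\mu^2}\right),
\]
which is $\le 0$ precisely when $k_\nu^2 R - \omega_\nu^2 \le k_\mu^2 R - \omega_\mu^2$, i.e. when the affine function $g(R) := (k_\mu^2 - k_\nu^2)R - (\omega_\mu^2 - \omega_\nu^2)$ satisfies $g(R)\ge 0$. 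The two inequalities in \eqref{eq:Prop4} say exactly that $g(a) \ge 0$ and $g(b) \ge 0$, where $b = \left(\frac{1}{n}\sum_{\iota=1}^n k_\iota\right)^2$.

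Next I would restrict attention to where a positive root of $f_{\sigma'}$ can live. By Prop.~\ref{pro:opt1}, every positive root of $f_{\sigma'}$ lies in $\left[a,\,b'\right]$ with $b' = \left(\frac{1}{n}\sum_{\iota\in\sigma'_+}k_\iota\right)^2 \le b$. If $b' < a$ this interval is empty and $f_{\sigma'}$ has no positive root, so we may assume $a \le b' \le b$. An affine function nonnegative at both endpoints of $[a,b]$ is nonnegative on all of $[a,b]$, hence $g \ge 0$ on $[a,b'] \subseteq [a,b]$, so $f_{\sigma'} \le f_\sigma$ on $[a,b']$. By the hypothesis that $f_\sigma$ has no positive roots and Lemma~\ref{lem:opt1}, $f_\sigma < 0$ on $[a,\infty) \supseteq [a,b']$; therefore $f_{\sigma'} < 0$ on $[a,b']$, so $f_{\sigma'}$ has no positive root in $[a,b']$ — and, by Prop.~\ref{pro:opt1}, none at all.

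The only genuine obstacle is bookkeeping at the interval endpoints: one cannot assume $a \le b$ a priori (nothing relates $\frac{\omega_n}{k_n}$ to $\frac{1}{n}\sum k_\iota$ in general), which is exactly why Prop.~\ref{pro:opt1} is invoked both to shrink the search window to $[a,b']$ and to dispose of the degenerate empty-interval case. Everything else — the one-line difference computation and the convexity remark that an affine map nonnegative at two points is nonnegative in between — is routine.
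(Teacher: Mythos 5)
Your proof is correct and follows essentially the same route as the paper: use the two inequalities in \eqref{eq:Prop4} to show the affine expression $(k_\mu^2-k_\nu^2)R - (\omega_\mu^2-\omega_\nu^2)$ is nonnegative on the interval from Prop.~\ref{pro:opt1}, deduce $f_{\sigma'} \le f_\sigma$ there, and conclude via Lemma~\ref{lem:opt1}. If anything, you are slightly more careful than the paper in restricting to the root-containing interval $[a,b']$ and disposing of the empty-interval case, a bookkeeping point the paper's one-line appeal to Lemma~\ref{lem:opt1} glosses over.
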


\begin{proof}
Given \eqref{eq:Prop4} and $R\in\left[  \left(  \dfrac{\omega_{n}}{k_{n}} \right)
^{2}, \left(  \displaystyle\frac{1}{n}\sum_{\mu=1}^{n}k_{\mu} \right) ^{2}~ \right]$, we have
\[
\left(  k_{\mu}^{2}-k_{\nu}^{2} \right) R \geq\omega_{\mu}^{2} - \omega_{\nu
}^{2}.
\]
Rearranging gives
\[
k_{\mu}^{2}R - \omega_{\mu}^{2} \geq k_{\nu}^{2}R - \omega_{\nu}^{2} \geq 0
\hbox{~~so that~~}
\sqrt{k_{\mu}^{2}R - \omega_{\mu}^{2}} \geq\sqrt{k_{\nu}^{2}R - \omega_{\nu
}^{2}}.
\]
Hence, $f_{\sigma^{\prime}}(R) \leq f_{\sigma}(R)$.
Therefore, the result follows from Lemma~\ref{lem:opt1}.
\end{proof}

A natural way to order the sign patterns $\sigma$ is
to present them using binary representations of the 
numbers in base $10$ from $0$ to $2^{n}-1$ 
where ``0'' in binary represents $-1$ and ``1'' 
in binary represents~$+1$.  For example, $\sigma = (+1,-1)$
corresponds to the binary number $10_2$,
so we can say~$\sigma$ corresponds to the number $2$ in base $10$.
We demonstrate this on a concrete application (power flow analysis) 
from electrical engineering~\cite{dorfler2013} and apply all the previous results.

\begin{example}[Power flow model: 4-bus system]\label{ex:PowerFlow}
Figure~\ref{fig:fourbussystem} depicts a lossless four-bus power system with active power injections $P_{1},\ldots,P_{4}$, voltage magnitudes $\left| V_{1}\right|,\ldots,\left| V_{4}\right| $, and line
susceptances $b_{12} = b_{13} = b_{14} = b_{23} = b_{24} = b_{34} = -1$. The equilibria of the power flow equations correspond to the equilibria of the rank-one coupled Kuramoto model, namely  the  solutions 
of~\eqref{eq:Kuramoto_equilibrium}
 where   $\omega= \left( P_{1},\ldots, 
P_{4}\right) $,  $k = (2|V_{1}|, 2|V_{2}|, 2|V_{3}|, 2|V_{4}|)$,  and $\theta= \left( \theta
_{1}, \theta_{2}, \theta_{3}, \theta_{4}\right)$
are the voltage angles. 

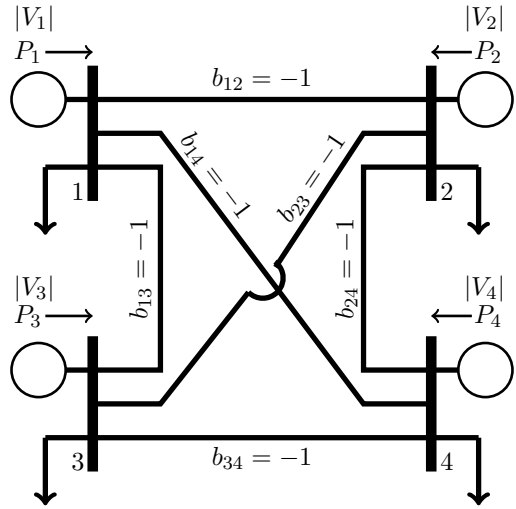
\begin{figure}[th]
\centering
\begin{circuitikz}[scale=0.9, transform shape]
\ctikzset{bipoles/length=0.6cm}
\path[draw,line width=4pt] (0,0) -- (0,2);
\draw (-0.45,0.4) node[below right] {$1$};
\draw (-1.3,2.7) node[right] {$\left|V_1\right|$};
\draw (-1.3,2.2) node[right] {$P_1$};
\path[draw,->,line width=1pt] (-0.7,2.2) -- (0,2.2);
\path[draw,line width=2pt] (-0.4,1.5) -- (0,1.5);
\path[draw,line width=2pt] (-0.7,0.5) -- (0,0.5);
\path[draw,->,line width=2pt] (-0.7,0.5) -- (-0.7,-0.5);
\draw[line width=1] (-0.8,1.5) circle (0.4);
\path[draw,line width=4pt] (5,0) -- (5,2);
\draw (5,0.4) node[below right] {$2$};
\path[draw,line width=2pt] (5,1.5) -- (5.4,1.5);
\path[draw,line width=2pt] (5,0.5) -- (5.7,0.5);
\path[draw,->,line width=2pt] (5.7,0.5) -- (5.7,-0.5);
\draw[line width=1] (5.8,1.5) circle (0.4);
\draw (5.35,2.7) node[right] {$\left| V_2 \right|$};
\draw (5.5,2.2) node[right] {$P_2$};
\path[draw,->,line width=1pt] (5.6,2.2) -- (5,2.2);
\path[draw,line width=4pt] (0,-4) -- (0,-2);
\draw (-0.45,-3.6) node[below right] {$3$};
\draw (-1.3,-1.3) node[right] {$\left|V_3\right|$};
\draw (-1.3,-1.7) node[right] {$P_3$};
\path[draw,->,line width=1pt] (-0.7,-1.7) -- (0,-1.7);
\path[draw,line width=2pt] (-0.4,-2.5) -- (0,-2.5);
\path[draw,line width=2pt] (-0.7,-3.5) -- (0,-3.5);
\path[draw,->,line width=2pt] (-0.7,-3.5) -- (-0.7,-4.5);
\draw[line width=1] (-0.8,-2.5) circle (0.4);
\path[draw,line width=4pt] (5,-4) -- (5,-2);
\draw (5,-3.6) node[below right] {$4$};
\path[draw,line width=2pt] (5,-2.5) -- (5.4,-2.5);
\draw[line width=1] (5.8,-2.5) circle (0.4);
\draw (5.35,-1.3) node[right] {$\left|V_4\right|$};
\draw (5.5,-1.7) node[right] {$P_4$};
\path[draw,->,line width=1pt] (5.6,-1.7) -- (5,-1.7);
\path[draw,line width=2pt] (5,-3.5) -- (5.7,-3.5);
\path[draw,->,line width=2pt] (5.7,-3.5) -- (5.7,-4.5);
\path[draw,line width=2pt] (0,1.5) -- (5,1.5);
\draw (2.5,1.5) node[above] {$b_{12} = -1$};
\path[draw,line width=2pt] (0,-3.5) -- (5,-3.5);
\draw (2.5,-3.5) node[below] {$b_{34} = -1$};
\path[draw,line width=2pt] (0,0.5) -- (1,0.5) -- (1,-2.5) -- (0,-2.5);
\draw (1,-1) node[rotate=90,anchor=south] {$b_{13} = -1$};
\path[draw,line width=2pt] (5,0.5) -- (4,0.5) -- (4,-2.5) -- (5,-2.5);
\draw (3.45,-1) node[rotate=90,anchor=north] {$b_{24} = -1$};
\path[draw,line width=2pt] (0,1) -- (1,1) -- (4,-3) -- (5,-3);
\draw (2.05,0.55) node[rotate=-53,anchor=north] {$b_{14} = -1$};
\path[draw,line width=2pt] (0,-3) -- (1,-3) -- (2.3,-1.35);
\draw[line width=2pt] (2.3,-1.35) arc (-135:53:0.3);
\path[draw,line width=2pt] (2.7,-0.98) -- (4,1) -- (5,1);
\draw (3,0.5) node[rotate=--56,anchor=north] {$b_{23} = -1$};
\end{circuitikz}
\caption{One-Line Diagram for a Four-Bus Electric Power System}
\label{fig:fourbussystem}
\end{figure}

Let us consider the case 
with
$P=(1.00,-1.25,2.00,-1.75)$ and $|V|=(1.10,0.93,1.05,0.90)$. 
That is, we aim to 
solve~\eqref{eq:Kuramoto_equilibrium} where 
$\omega=(1.00,-1.25,2.00,-1.75)$ and $k=(2.20, 1.86, 2.10, 1.80)$. 
By taking
the $16$ possible sign patterns as the numbers \[0\equiv(-1,-1,-1,-1),~\dots
,~15\equiv(+1,+1,+1,+1),\] some possibilities can immediately be ruled out:
\begin{itemize}
\item $0$, $1$, $2$, $4$, $5$, $8$ by Prop.~\ref{pro:opt1};
\item $0$, $1$, $2$, $3$, $4$, $5$ by Prop.~\ref{pro:opt2}.
\end{itemize}
We now consider the remaining possibilities starting from the largest:
\begin{itemize}
\item One equilibria resulting from each of the following: $15$, $14$, $13$,
$12$, $11$, $10$;
\item No equilibria resulting from $9\equiv(+1,-1,-1,+1)$;
\item Two equilibria resulting from $7\equiv(-1,+1,+1,+1)$;
\item No equilibria resulting from $6\equiv(-1,+1,+1,-1)$.
\end{itemize}
For example, since $9\equiv(+1,-1,-1,+1)$ yields no equilibria,
Lemma~\ref{pro:opt3} provides that $8$, $1$, and~$0$ 
also yield no equilibria while
Lemma~\ref{pro:opt4} provides that $5$ yields no equilibria.
In summary, 
this particular case has 
a total of eight equilibria satisfying \eqref{eq:Kuramoto_equilibrium}.
\end{example}

We now turn to consider a special case for which we 
can provide further optimizations:
\begin{enumerate}
\item[\textbf{IC4}:] $k_{1} \geq k_{2} \geq\cdots\geq k_{n}$
\end{enumerate}
We note that \textbf{IC4} is  independent  of the implicitly assumed conditions \textbf{IC1}--\textbf{IC3}
so we will explicitly state when this condition is also required.
With \textbf{IC4}, we provide 
a simplification of Lemma~\ref{pro:opt4}.

\begin{lemma}\label{pro:opt5}
Suppose that \textbf{IC4} is satisfied and $\mu$ and $\nu$ are 
such that $\sigma_{\mu}=+1$ and $\sigma_{\nu}=-1$. 
Let $\sigma^{\prime}$ be the same as $\sigma$ except that 
$\sigma_{\mu}^{\prime}=\sigma_\nu=-1\ $ and $\sigma_{\nu}^{\prime
}=\sigma_\mu=+1$. If $\mu< \nu$ and $f_{\sigma}$ has no positive roots, 
then $f_{\sigma^{\prime}}$ also has no positive roots.
\end{lemma}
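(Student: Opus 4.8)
The plan is to mirror the proof of Lemma~\ref{pro:opt3}: reduce the claim to the pointwise bound $f_{\sigma'}(R) \le f_{\sigma}(R)$ on the ray $\left[\left(\frac{\omega_{n}}{k_{n}}\right)^{2},\infty\right)$ and then apply Lemma~\ref{lem:opt1} twice. Since $\sigma$ and $\sigma'$ agree away from $\{\mu,\nu\}$ and $\sigma_{\mu}=-\sigma'_{\mu}=+1$, $\sigma_{\nu}=-\sigma'_{\nu}=-1$, the defining formula~\eqref{eq:Univar} gives
\[
f_{\sigma'}(R)-f_{\sigma}(R)=\frac{2}{n}\left(\sqrt{k_{\nu}^{2}R-\omega_{\nu}^{2}}-\sqrt{k_{\mu}^{2}R-\omega_{\mu}^{2}}\right),
\]
and by \textbf{IC3} both radicands are nonnegative once $R\ge\left(\frac{\omega_{n}}{k_{n}}\right)^{2}$ (because then $\omega_{\iota}^{2}\le k_{\iota}^{2}(\omega_{n}/k_{n})^{2}\le k_{\iota}^{2}R$). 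So it suffices to prove $(k_{\mu}^{2}-k_{\nu}^{2})R\ge\omega_{\mu}^{2}-\omega_{\nu}^{2}$ for every such $R$.

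The key point is that \textbf{IC4} with $\mu<\nu$ forces $k_{\mu}\ge k_{\nu}>0$, hence $k_{\mu}^{2}-k_{\nu}^{2}\ge0$, so the left-hand side is nondecreasing in $R$ and it is enough to check the inequality at the smallest admissible value $R=\left(\frac{\omega_{n}}{k_{n}}\right)^{2}$ --- which is exactly the first inequality of~\eqref{eq:Prop4}. To verify that: if $\omega_{\mu}^{2}\le\omega_{\nu}^{2}$ the right-hand side is nonpositive and the left-hand side nonnegative, so we are done; otherwise \textbf{IC3} and $\mu<\nu\le n$ give $\frac{\omega_{\mu}^{2}}{k_{\mu}^{2}}\le\frac{\omega_{\nu}^{2}}{k_{\nu}^{2}}\le\frac{\omega_{n}^{2}}{k_{n}^{2}}$; clearing denominators in the first of these and rearranging yields $k_{\nu}^{2}(\omega_{\mu}^{2}-\omega_{\nu}^{2})\le\omega_{\nu}^{2}(k_{\mu}^{2}-k_{\nu}^{2})$, and dividing by $k_{\nu}^{2}>0$ and using $\frac{\omega_{\nu}^{2}}{k_{\nu}^{2}}\le\frac{\omega_{n}^{2}}{k_{n}^{2}}$ together with $k_{\mu}^{2}-k_{\nu}^{2}\ge0$ finishes it. Having established $f_{\sigma'}\le f_{\sigma}$ on the ray, Lemma~\ref{lem:opt1} (applied to $f_{\sigma}$, which is $<0$ there since it has no positive roots, and then to $f_{\sigma'}\le f_{\sigma}<0$) gives the conclusion.

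The one subtlety worth flagging --- and where care is needed --- is that one cannot simply cite Lemma~\ref{pro:opt4}: its second hypothesis in~\eqref{eq:Prop4} can genuinely fail (for instance when $|\omega_{n}/k_{n}|$ is so large that no sign pattern admits a positive root, while $\frac{1}{n}\sum_{\iota}k_{\iota}$ stays bounded). The role of \textbf{IC4} here is precisely to fix the sign of $k_{\mu}^{2}-k_{\nu}^{2}$, which collapses the two-endpoint comparison underlying Lemma~\ref{pro:opt4} to a single left-endpoint check; carrying this out on the whole ray $\left[\left(\frac{\omega_{n}}{k_{n}}\right)^{2},\infty\right)$ via Lemma~\ref{lem:opt1}, exactly as in Lemma~\ref{pro:opt3}, is what makes the argument go through. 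Everything else is routine manipulation of~\eqref{eq:Univar}.
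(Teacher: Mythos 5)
Your proposal is correct and follows essentially the same route as the paper: both establish the pointwise bound $f_{\sigma'}(R)\le f_{\sigma}(R)$ on $\bigl[(\omega_n/k_n)^2,\infty\bigr)$ by showing $\sqrt{k_\mu^2R-\omega_\mu^2}\ge\sqrt{k_\nu^2R-\omega_\nu^2}$ there, and then conclude with Lemma~\ref{lem:opt1}. The only difference is cosmetic --- the paper verifies the radicand comparison in one line by factoring it as $k_\mu\sqrt{R-(\omega_\mu/k_\mu)^2}\ge k_\nu\sqrt{R-(\omega_\nu/k_\nu)^2}$ and using $k_\mu\ge k_\nu$ with \textbf{IC3}, whereas you reach the same inequality via an explicit left-endpoint check of $(k_\mu^2-k_\nu^2)R\ge\omega_\mu^2-\omega_\nu^2$.
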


\begin{proof}
From \textbf{IC4}, and $\mu < \nu$, we have
\[
k_{\mu} \geq k_{\nu} \;\text{ and } \left(  \frac{\omega_{\mu}}{k_{\mu}}
\right) ^{2} \leq\left(  \frac{\omega_{\nu}}{k_{\nu}} \right) ^{2}.
\]
For $R\geq \left(\dfrac{\omega_n}{k_n}\right)^2$,
\[
k_{\mu} \sqrt{R - \left(  \frac{\omega_{\mu}}{k_{\mu}} \right) ^{2}} \geq
k_{\nu} \sqrt{R - \left(  \frac{\omega_{\nu}}{k_{\nu}} \right) ^{2}}
\hbox{~~so that~~}
\sqrt{k_{\mu}^{2} R - \omega_{\mu}^{2}} \geq\sqrt{k_{\nu}^{2} R - \omega_{\nu
}^{2}}.
\]
Hence, $f_{\sigma^{\prime}}(R) \leq f_{\sigma}(R)$.
Therefore, the result follows from Lemma~\ref{lem:opt1}.
\end{proof}

Writing $\sigma$ as a binary number, Lemma~\ref{pro:opt3} 
allows changing a ``1'' to a ``0.'' 
With \textbf{IC4}, Lemma~\ref{pro:opt5} 
allows swapping a ``0'' and a ``1'' provided the ``0''
is on the right of ``1.''  Thus, with this understanding and ordering, we state the main optimization result.

\begin{theorem}
\label{cor:opt1} Assume that {\rm\textbf{IC4}} is satisfied.
\begin{enumerate}
\item\label{case:1} Suppose $\sigma = (+1,\dots,+1)$ and $f_{\sigma}$ has no positive roots.
Then, for every $\sigma'\in\{-1,+1\}^n$, $f_{\sigma'}$ has no positive roots.
\item\label{case:2} Suppose $\sigma$ has exactly one entry which is $-1$ and
that $f_{\sigma}$ has no positive roots.
Then, $f_{\sigma'}$ also has no positive roots for every
$\sigma'\in\{-1,+1\}^n$ which is smaller than $\sigma$ using the aforementioned binary representation.
\item Suppose that $\sigma$ has at least two entries equal to $-1$
and $f_\sigma$ has no positive roots.  Let $\ell$ be the penultimate 
entry of a~$-1$ in $\sigma$.
Let $\sigma = (\rho_1,\rho_2)$ where $\rho_1 = (\sigma_1,\dots,\sigma_{\ell-1},-1)$
and $\rho_2 = (\sigma_{\ell+1},\dots,\sigma_n)$.
Then, for every \mbox{$\sigma' = (\rho_1,\rho_2^\prime)\in\{-1,+1\}^n$}
such that $\rho_2^\prime$ is smaller than~$\rho_2$ using the aforementioned binary representation, $f_{\sigma'}$ also has no positive roots.
\end{enumerate}
\end{theorem}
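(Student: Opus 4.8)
The plan is to view each sign pattern $\sigma\in\{-1,+1\}^n$ as a binary string ($+1\mapsto$ ``1'', $-1\mapsto$ ``0'', with the first coordinate most significant) and to regard Lemmas~\ref{pro:opt3} and~\ref{pro:opt5} as two string ``moves,'' each preserving the property ``$f_\sigma$ has no positive roots'': (O1) replace a ``1'' by a ``0'' in any coordinate; and (O2), valid under \textbf{IC4}, swap a ``1'' with a ``0'' occurring strictly to its right (which decreases the associated integer). All three parts then reduce to determining which strings are reachable from a given $\sigma$ by iterating (O1) and (O2), starting from the hypothesis that $f_\sigma$ has no positive roots. For part~\ref{case:1}, from $\sigma=(+1,\dots,+1)$ one reaches an arbitrary $\sigma'$ by applying (O1) in each coordinate where $\sigma'$ has a $-1$, so $f_{\sigma'}$ has no positive roots for every $\sigma'$.

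Part~\ref{case:2} is the combinatorial core. Let $j$ be the unique position of the $-1$ in $\sigma$, so $\sigma=1^{j-1}\,0\,1^{n-j}$ as a string, and let $\tau$ be any string strictly smaller than $\sigma$. If $\tau$ has a ``0'' in position $j$, then $\tau$ is bitwise $\le\sigma$ and applying (O1) at each ``0''-position of $\tau$ other than $j$ (all of which are ``1'' in $\sigma$) transforms $\sigma$ into $\tau$. If instead $\tau$ has a ``1'' in position $j$, then since $\tau<\sigma$ the leftmost coordinate $p$ at which $\tau$ and $\sigma$ disagree satisfies $p<j$ (a first disagreement at $j$ would force $\tau>\sigma$), hence $\tau_p=0$, $\sigma_p=1$, and $\tau$ agrees with $\sigma$ on positions $1,\dots,p-1$; in particular every ``0'' of $\tau$ lies in a position $\ge p$. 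First apply (O2) with $(\mu,\nu)=(p,j)$ to move the single ``0'' of $\sigma$ from position $j$ to position $p$, yielding $1^{p-1}\,0\,1^{n-p}$; then apply (O1) at each ``0''-position of $\tau$ distinct from $p$ (each $>p$, hence currently ``1'') to obtain $\tau$ exactly, while position $j$ stays ``1'' throughout. In either case $f_\tau$ has no positive roots.

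The third part reduces to part~\ref{case:2} applied to the tail. Since $\ell$ is the penultimate position of a $-1$ in $\sigma$, the block $\rho_2=(\sigma_{\ell+1},\dots,\sigma_n)$ contains exactly one $-1$, and any target $\sigma'=(\rho_1,\rho_2')$ with $\rho_2'<\rho_2$ differs from $\sigma$ only within coordinates $\ell+1,\dots,n$. Running the argument of part~\ref{case:2} on the length-$(n-\ell)$ string $\rho_2$ produces a sequence of (O1)/(O2) moves acting only on positions $>\ell$; applied to $\sigma$ these moves fix $\rho_1$ and carry $\rho_2$ to $\rho_2'$ (for (O2), \textbf{IC4} still supplies the needed ordering of the $k_\mu$ among the tail indices), so $f_{\sigma'}$ has no positive roots.

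I expect the main obstacle to be the reachability claim in part~\ref{case:2}: one must carefully match the most-significant-first convention so that ``$\tau<\sigma$'' is correctly converted into a statement about the leftmost differing coordinate, and must check that after the single (O2) swap the remaining ``0''-positions of $\tau$ are precisely those still requiring an (O1) flip and that position $j$ is never disturbed. The remaining steps, including the reduction in the third part and the base case in part~\ref{case:1}, are routine bookkeeping.
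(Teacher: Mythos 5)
Your proof is correct and follows the same route as the paper: part~\ref{case:1} by iterating Lemma~\ref{pro:opt3}, part~\ref{case:2} by combining Lemma~\ref{pro:opt3} with the swap of Lemma~\ref{pro:opt5}, and the third part by applying part~\ref{case:2} to the tail $\rho_2$. The paper's own proof is only a three-line sketch, and your reachability argument for part~\ref{case:2} (at most one swap moving the single ``0'' leftward to the first disagreement position, followed by flips) is a correct and more explicit filling-in of what the paper leaves as ``alternately applying'' the two lemmas.
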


\begin{proof} We prove the three cases as follows.
\begin{enumerate}
        \item This case follows immediately by repeated application of Lemma~\ref{pro:opt3}.
        
   \item This case follows by alternately applying
   Case~\ref{case:1} to parts of $\sigma$ and Lemma~\ref{pro:opt5}.
        
        \item This case follows by applying Case~\ref{case:2} to $\rho_2$.
\end{enumerate}
\end{proof}

The main benefit of this theorem is that it allows one to skip {\em sequential} sign cases by directly computing the next case that needs to be checked from the current case.

\begin{example}
To illustrate, suppose the input parameters satisfy {\rm\textbf{IC4}} and $f_\sigma$ has no positive roots 
for $\sigma= \left( +1, +1, -1, +1, -1, +1 \right) \equiv 110101_2 = 53$.
Theorem~\ref{cor:opt1} shows that $f_{\sigma'}$ also
has no positive roots for the following sequential sign
patterns $\sigma'$:
\begin{align*}
&  \left( +1, +1, -1, +1, -1, -1 \right) \equiv 110100_2 = 52 \\
&  \left( +1, +1, -1, -1, +1, +1 \right) \equiv 110011_2 = 51 \\
&  \left( +1, +1, -1, -1, +1, -1 \right) \equiv 110010_2 = 50\\
&  \left( +1, +1, -1, -1, -1, +1 \right) \equiv 110001_2 = 49\\
&  \left( +1, +1, -1, -1, -1, -1 \right) \equiv 110000_2 = 48.
\end{align*}
Furthermore, 48 can be immediately calculated from 53 by zeroing out everything from the next to last 0 onward, so that the five listed cases do not need to be considered at all.
\end{example}

The following utilizes these previous results 
assuming \textbf{IC4} to more efficiently
compute the set of all equilibria to \eqref{eq:Kuramoto_equilibrium}.
This depends on two algorithms:
a root finding method that returns the set of all roots of $f_\sigma$
in an interval $I$, denoted $\mathbf{Solve}(f_\sigma,I)$,
and a method that returns a sign pattern in $\{-1,+1\}^n$
given a number $0\leq\iota\leq 2^n-1$, denoted
$\mathbf{Convert}(\iota)$.

\begin{algorithm}
[Optimized]\label{alg:Optimized}\ 

\begin{description}
[leftmargin=3em,style=nextline,itemsep=0.5em]

\item[In:] $\omega\in\mathbb{R}^{n}$ and $k\in\mathbb{R}_{>0}^{n}$ satisfying
{\rm \textbf{IC1}--\textbf{IC4}}.

\item[Out:] $\Theta$, the set of equilibria satisfying {\rm\textbf{OC1}}.
\end{description}

\begin{enumerate}
\item $\Theta\leftarrow\{\}$

\item $\iota\leftarrow2^{n}-1$

\item\label{item:while} While $\iota\geq0$ do

\begin{enumerate}

\item $\sigma\leftarrow \mathbf{Convert}(\iota)$

\item\label{item:3b} $I \leftarrow\left[  \left(  \dfrac{\omega_{n}}{k_{n}} \right) ^{2}, \;\;
\left( \displaystyle \frac{1}{n}\sum_{\mu\in\sigma_{+}}k_{\mu} \right) ^{2} ~ \right] $

\item\label{item:3c} If $I$ is empty, then

\begin{enumerate}

\item Decrement $\iota$ according to Theorem~\ref{cor:opt1}

\item Continue (go back to the start of Step~\ref{item:while})
\end{enumerate}

\item\label{item:3d} If $\sum_{\mu=1}^{\ell} \sigma_{\mu
}k_{\mu} \leq0$ for all $\ell= 1,2, \ldots, n$,  then

\begin{enumerate}

\item Decrement $\iota$ according to Theorem~\ref{cor:opt1}.

\item Continue (go back to the start of Step~\ref{item:while})
\end{enumerate}

\item $f_\sigma \leftarrow-R + \frac{1}{n} \sum_{\mu=1}^{n} \sigma_{\mu}\sqrt{k_{\mu
}^{2} R - \omega_{\mu}^{2}}$

\item $\mathcal{R}\leftarrow\mathbf{Solve}(f_\sigma,I)$

\item\label{item:3g} If $\mathcal{R} = \emptyset$, then

\begin{enumerate}

\item Decrement $\iota$ according to Theorem~\ref{cor:opt1}

\item Continue (go back to the start of Step~\ref{item:while})
\end{enumerate}

\item For $R\in\mathcal{R}$ do

\begin{enumerate}
\item Compute $\theta\in(-\pi,\pi]^{n}$ such that $\sin\theta_{\nu}=\frac{\omega_{\nu}}{k_{\nu
}\sqrt{R}}\hbox{~and~}\mathrm{sign}\cos\theta_{\nu}=\sigma_{\nu}$
for $\nu = 1,\dots,n$.

\item Add $\theta$ to $\Theta$
\end{enumerate}

\item $\iota\leftarrow\iota-1$
\end{enumerate}
\end{enumerate}
\end{algorithm}

\begin{remark}
In Algorithm~\ref{alg:Optimized},
Steps~\ref{item:3b} and~\ref{item:3d} follow from Prop.~\ref{pro:opt1} 
and~\ref{pro:opt2}, respectively.
\end{remark}

\begin{example}\label{ex:Illustrative5}
To illustrate, we apply Algorithm~\ref{alg:Optimized}
to the setup from Ex.~\ref{ex:Illustrative1}.
\begin{itemize}
\item $\iota = 3$ yielding $\sigma = (+1,+1)$:
\begin{itemize}
\item[$\circ$] $I = [4, 12.25]$ 
\item[$\circ$] One positive root of $f_\sigma(R) = -R + \frac{1}{2}\left(\sqrt{25 R - 16} + \sqrt{4 R - 16} \right)$ on $I$, namely $R = 10.25$.
\item[$\circ$] This yields the equilibrium $\theta=(0.2526, -0.6747)$.
\end{itemize}
\item $\iota = 2$ yielding $\sigma=(+1,-1)$:
\begin{itemize}
\item[$\circ$] $I = [4, 6.25]$ 
\item[$\circ$] One positive root of $f_\sigma(R) = -R + \frac{1}{2}\left(\sqrt{25 R - 16} - \sqrt{4 R - 16} \right)$ in $I$, namely $R = 4.25$.
\item[$\circ$] This yields the equilibrium $\theta=(0.3985, -1.8158)$.
\end{itemize}
\item $\iota = 1$ yielding $\sigma=(-1,+1)$:
\begin{itemize}
\item[$\circ$] $I = [4, 1]$ is empty
\item[$\circ$] Theorem~\ref{cor:opt1} removes the $\iota = 0$ case.
\end{itemize}
\end{itemize}
In summary, there are two equilibria satisfying \eqref{eq:Kuramoto_equilibrium}.
\end{example}

\subsection{Performance}\label{sec:Performance}

We implemented both Algorithms~\ref{alg:Basic}
and~\ref{alg:Optimized} in C++ using the C-XSC library \cite{CXSC} with the univariate solver
being an interval Newton method \cite[Chap.~6]{hammer1995c++}. The implementation is available at \url{http://dx.doi.org/10.7274/R09W0CDP}.
In this section, we benchmark the performance of this 
with the following methods for computing all equilibria to
\eqref{eq:Kuramoto_equilibrium}:
\begin{itemize}
\item solve \eqref{eq:Kuramoto_poly} using Gr\"obner basis
techniques in {\tt Macaulay2} \cite{M2};
\item solve \eqref{eq:Kuramoto_poly} using 
homotopy continuation in {\tt Bertini} \cite{BHSW06}
as in \cite{mehta2015algebraic};
\item compute equilibria for \eqref{eq:Kuramoto_equilibrium}
using elliptical continuation from~\cite{lesieutre_wu_allerton2015}.
\end{itemize}
We end with an example having 
$n=60$ that is easily 
solvable using Algorithm~\ref{alg:Optimized}.

\paragraph{Comparison with computational algebraic geometry:}

We use the following 
setup from~\cite{mehta2015algebraic} to compare with 
solving \eqref{eq:Kuramoto_poly}
using {\tt Macaulay2} and {\tt Bertini}
with serial computations.  For each $n = 3,\dots,12$,
the natural frequencies are equidistant, namely
$\omega_\mu = -1 + (2\mu-1)/n$ for $\mu = 1,\dots,n$,
with uniform coupling $k = (\sqrt{1.5},\dots,\sqrt{1.5})$.
To simplify the algebraic geometry computations
using {\tt Macaulay2} and {\tt Bertini}, we compute
the equilibria as in \cite{mehta2015algebraic}
by setting $\theta_n = 0$ ($s_n = 0$ and $c_n = 1$)
with the results summarized in Table~\ref{tab:comparison}.

With {\tt Macaulay2}, we simply computed
the total number of complex solutions,
i.e.,~the degree
of the ideal generated by the polynomials in~\eqref{eq:Kuramoto_poly}
when $s_n = 0$ and $c_n = 1$.  
Thus, one would need
to perform additional computations to compute
the number of real solutions.  
The symbol $\ddagger$ means that the computation did not
complete within 48 hours.

With {\tt Bertini}, we performed two
different computations.  The first was 
to directly solve \eqref{eq:Kuramoto_poly}
using regeneration~\cite{Regeneration}
and the second utilized a parameter
homotopy~\cite{Parameter}.
Both of these computations 
provide all real and non-real solutions to \eqref{eq:Kuramoto_poly}.

Although {\tt Bertini} is parallelized
and Algorithm~\ref{alg:Optimized} is parallelizable,
we again note that the data in Table~\ref{tab:comparison}
is based on using serial processing.
Nonetheless, this shows the advantage
of using Algorithm~\ref{alg:Optimized}
to compute all equilibria without needing 
to compute the non-real 
solutions~of~\eqref{eq:Kuramoto_poly}.

\begin{table}[!ht]
{\scriptsize
\begin{center}
\begin{tabular}{c|c|c|c|c|c|c|c|c|c|c} 
$n$ & $3$ & $4$ & $5$ & $6$ & $7$ & $8$ & $9$ & $10$ & $11$ &  $12$ \\
\hline
\hline
\# real & $2$ & $2$ & $4$ & $4$ & $4$ & $4$ & $4$ & $4$ & $8$ & $8$\\
\hline
\# complex & $6$ & $12$ & $28$ & $56$ & $118$ & $238$ & $486$ & $976$ & $1972$ & $3958$ \\
\hline
\hline
{\tt Macaulay2} degree & $<0.1$s & $<0.1$s & $0.1$s & $1.1$s & $7.0$s & $72.6$s & $716.5$s & $10783.7$s & $149578.0$s & $\ddagger$ \\
\hline
{\tt Bertini} regeneration & $0.3$s & $1.2$s & $3.4$s & $13.4$s & $45.1$s & $116.6$s & $210.1$s & $486.2$s & $1493.1$s & $3443.5$s\\
\hline
{\tt Bertini} parameter & $<0.1$s & $<0.1$s & $0.2$s & $0.4$s & $1.1$s & $2.2$s & $6.9$s & $15.0$s & $36.9$s & $116.8$s\\
\hline
Algorithm~\ref{alg:Optimized} & $<0.1$s & $<0.1$s & $<0.1$s & $<0.1$s & $<0.1$s & $<0.1$s & $<0.1$s & $<0.1$s & $<0.1$s & $<0.1$s
\end{tabular}
\end{center}
\caption{Comparison of various solving methods
}\label{tab:comparison}
}
\end{table}

\paragraph{Comparison with elliptical continuation:}

We next compare Algorithm~\ref{alg:Optimized} with the elliptical continuation method proposed in~\cite{lesieutre_wu_allerton2015}. While having the advantage of being applicable to a 
more general setting of power flow equations, 
the elliptical continuation method in~\cite{lesieutre_wu_allerton2015} comes with both theoretical and computational drawbacks relative to Algorithm~\ref{alg:Optimized} when considered in the context of the Kuramoto model. 
In contrast to Algorithm~\ref{alg:Optimized}, 
there currently is no theoretical guarantee that
the elliptical continuation method in~\cite{lesieutre_wu_allerton2015} will 
compute all equilibria.
Moreover, the computational speed of Algorithm~\ref{alg:Optimized} can be several orders of magnitude faster than the elliptical continuation method in~\cite{lesieutre_wu_allerton2015}. Consider, for instance, a test case with $n = 18$, $k = (1,\dots,1)$, and 
$$
\begin{array}{rcl}
\omega &=&
\left( 0.1000,\, -0.1000,\, -0.1415,\, -0.1429,\, 0.1500,\, 0.2000,\, -0.4142,\, 0.7000,\, -0.8500,\, \right. \\
& &  ~\left. 1.4142,\, 2.3000,\, 3.1415,\, -3.1904,\, -3.5000,\, 4.3333,\, -5.0000,\, -6.0000,\, 7.0000 \right).
\end{array}
$$
When interpreted as a power flow problem, this test case represents a power system composed of~$18$ buses with fixed, unity voltage magnitudes and specified active power injections given by $\omega$ in normalized ``per unit'' values. The buses are completely connected by lines with unity reactance and zero resistance. While this is a very special example of a power system network, the corresponding test case enables comparison between Algorithm~\ref{alg:Optimized} and the elliptical continuation method in~\cite{lesieutre_wu_allerton2015} in the context of the Kuramoto model.

A serial implementation of the elliptical continuation method in~\cite{lesieutre_wu_allerton2015} in M{\sc atlab}
yielded $8538$ equilibria satisfying \eqref{eq:Kuramoto_equilibrium}
in $1.935\times10^5$~seconds ($53.77$~hours).
For a fair comparison, we used a serial
implementation of Algorithm~\ref{alg:Optimized} 
in M{\sc atlab} which computed $8538$ 
equilibria in~$13.9$ seconds.
Hence, the implementation of Algorithm~\ref{alg:Optimized} in M{\sc atlab}
is roughly four orders of magnitude faster
than the M{\sc atlab} implementation of 
\cite{lesieutre_wu_allerton2015} for this example.
We note that the~C++ implementation of Algorithm~\ref{alg:Optimized} took $6.6$ seconds.

\paragraph{An example with $n = 60$:}

We conclude with an example solved
by Algorithm~\ref{alg:Optimized} for $n = 60$
having $k = (60,\dots,60)$ and 
\begin{center}
       \begin{tabular}{ c c c c c c c c c c c c}
                $\omega = $( & $0$, & $0$, & $0$, & $0$, & $0$, & $0$, & $0$, & $0$, & $0$, & $20$,& \\
                & $-20$, & $40$, & $-60$, & $60$, & $60$, & $80$, & $-80$, & $-100$, & $-100$, & $120$,& \\
                & $-160$, & $-160$, & $-200$, & $240$, & $-280$, & $-300$, & $300$, & $-360$, & $360$, & $-380$,& \\
                & $420$, & $420$, & $-420$, & $-460$, & $460$, & $500$, & $520$, & $540$, & $-560$, & $-600$,& \\
                & $-620$, & $620$, & $-640$, & $660$, & $660$, & $660$, & $680$, & $-720$, & $780$, & $-800$,& \\
                & $820$, & $-820$, & $-840$, & $-840$, & $-880$, & $920$, & $-980$, & $-980$, & $-1080$, & $3500$& $)$.
       \end{tabular}
 \end{center}
This example has 2 equilibria 
satisfying~\eqref{eq:Kuramoto_equilibrium} 
with the total computation time 
using the C++ implementation of Algorithm~\ref{alg:Optimized}
taking under a second. For comparison, the elliptical continuation method as described in the previous example took $5609$ seconds ($93.5$ minutes). This example is simply too large for 
current methods that compute all complex roots.
Generally, problems with~$\left( \dfrac{w_n}{k_n} \right)^2$ near $\left( \dfrac{1}{n} \displaystyle\sum_{\mu=1}^{n}k_{\mu} \right)^2$ will be solved quickly by Algorithm~\ref{alg:Optimized} 
as a consequence of Prop.~\ref{pro:opt1} and Theorem~\ref{cor:opt1}.

\section{Counting equilibria}\label{sec:UpperBound}

After reviewing known information, we compute
the generic root count for \eqref{eq:Kuramoto_poly}
which bounds the number of equilibria to \eqref{eq:Kuramoto_equilibrium}.
By analyzing the number of equilibria
in particular cases, we can
asymptotically compare
the maximum number of equilibria
to the generic root count of \eqref{eq:Kuramoto_poly}.

\subsection{Summary of known results}\label{sec:SmallCases}

As mentioned in the
Introduction, the arbitrary coupling
case \eqref{eq:Kuramoto_equilibrium2} 
has at most $\binom{2n-2}{n-1}$
equilibria \cite{baillieul1982}
and, for $n\geq4$, it is currently unknown
if this bound can be achieved.
The minimum number of equilibria
is easily observed to be $0$.

There are results regarding
the number of equilibria 
for the standard Kuramoto model 
that apply to the rank-one coupled Kuramoto
model as well.  
When $n = 2$, it is easy
to see that the maximum number of equilibria 
satisfying \eqref{eq:Kuramoto_equilibrium} is $2$.
By \textbf{IC1}, we have $\omega_2 = -\omega_1 \neq 0$,
so, without loss of generality, we assume $\omega_1 > 0$.
With $k = (1,1)$, one can verify:
\begin{itemize}
\item $2$ equilibria if $0 < \omega_{1} < \dfrac{1}{2}$;
\item $1$ equilibrium (of ``multiplicity 2'') if $\omega_1 = \dfrac{1}{2}$;
\item $0$ equilibria if $\omega_{1}>\dfrac{1}{2}$.
\end{itemize}

For $n=3$, the maximum number of equilibria is $6$
\cite{baillieul1982,XKL16}.
When $k = (1,1,1)$, Prop.~\ref{pro:opt1}
shows that equilibria can only occur if 
each $|\omega_\nu|\leq 1$.  By taking $\omega_3 = -\omega_1-\omega_2$ due to \textbf{IC1}, 
Figure~\ref{fig:realn3} plots the
regions having $0$, $2$, $4$, and $6$ distinct equilibria for
$\omega_{1},\omega_{2}\in [-1,1]$.
Such a plot has appeared previously, e.g., \cite{Bronski,Hiskens}.

\begin{figure}[hptb]
\begin{center}
\includegraphics[scale=0.35]{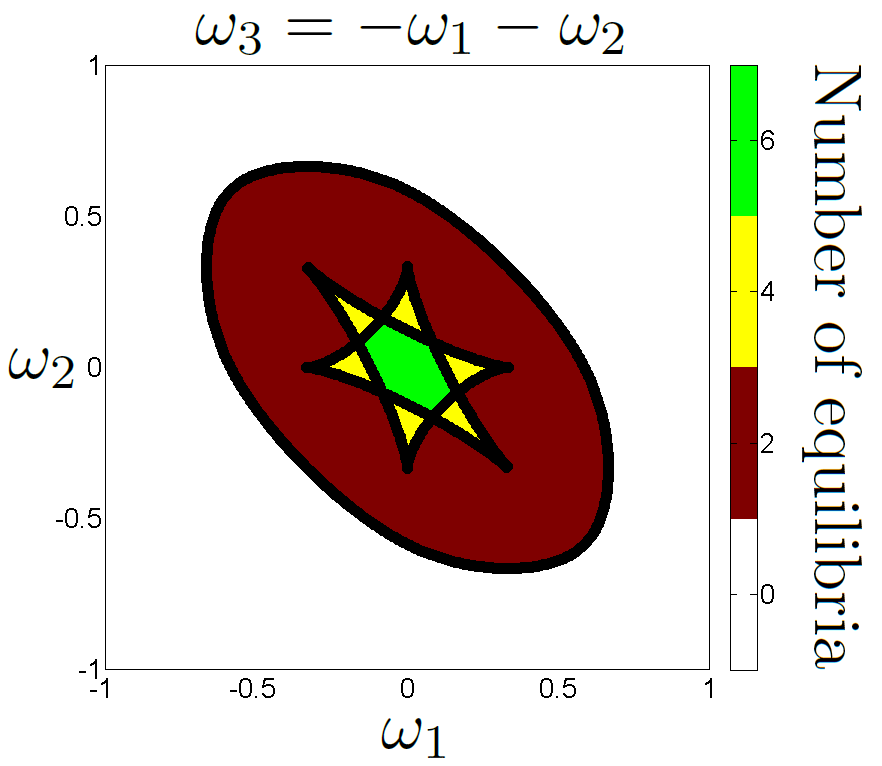}
\end{center}
\caption{Regions based on the number of equilibria 
satisfying~\eqref{eq:Kuramoto_equilibrium} when $n = 3$
and $k = (1,1,1)$
}\label{fig:realn3}
\end{figure}

For $n=4$, the maximum number of equilibria is $14$
\cite{baillieul1982,XKL16} 
and it is an open problem to determine if this bound is 
sharp.  
A recent experiment~\cite{XKL16} 
applied to the standard Kuramoto model
computed all equilibria for selected values of $\omega\in\bR^4$ 
in a relevant compact parameter space
based on a grid with step-size $1/2000$.
Since this experiment attained 
a maximum of $10$ equilibria,
they conjecture 
that the maximum number of equilibria satisfying
\eqref{eq:Kuramoto_equilibrium}
when~$n = 4$ and $k = (\sqrt{K},\sqrt{K},\sqrt{K},\sqrt{K})$ 
is~$10$, which is strictly smaller than the upper
bound of $14$.  We revisit this case 
in Ex.~\ref{ex:n4} and~\ref{ex:n42}.

\subsection{Bounding the number of equilibria}

As summarized in Section~\ref{sec:SmallCases},
the maximum number of equilibria to \eqref{eq:Kuramoto_equilibrium}
is $2,6,14$ for $n = 2,3,4$, respectively.  
Theorem~\ref{thm:UpperBound}
shows that $2^n-2$ bounds the number of equilibria
with Corollary~\ref{cor:ComplexBound} showing
that $2^n-2$ 
is actually the generic root count
for the polynomial system \eqref{eq:Kuramoto_poly}
modulo shift.

Let $\omega\in\bR^n$ and $k\in\bR_{> 0}^n$
satisfy \textbf{IC1}-\textbf{IC3}.
The following shows that the function
\begin{equation}\label{eq:UnivariatePolynomial}
g(R) ~~= \prod_{\sigma\in\{-1,+1\}^n} f_\sigma(R)~~
= \prod_{\sigma\in\{-1,+1\}^n} 
\left(
-R + \frac{1}{n}\sum_{\mu=1}^n \sigma_\mu\sqrt{k_\mu^2 R - \omega_\mu^2} \right),
\end{equation}
is actually a reducible polynomial.  

\begin{proposition}\label{prop:Degree}
The univariate function $g$ in \eqref{eq:UnivariatePolynomial} 
is a polynomial of degree $2^n$.
Moreover, 
there exists a polynomial $h(R)$ of degree $2^n-2$ with
$$g(R) = R^2\cdot h(R).$$
\end{proposition}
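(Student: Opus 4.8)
The key observation is that $g(R)$ is a symmetric function of the $n$ quantities $\sqrt{k_\mu^2 R - \omega_\mu^2}$ in the sense that it is invariant under each sign flip $\sqrt{k_\mu^2 R - \omega_\mu^2} \mapsto -\sqrt{k_\mu^2 R - \omega_\mu^2}$. Indeed, taking the product over all $\sigma \in \{-1,+1\}^n$ pairs up each factor $f_\sigma$ with the factor $f_{\sigma'}$ obtained by flipping the $\mu$-th sign, and the product $f_\sigma \cdot f_{\sigma'}$ depends only on even powers of $\sqrt{k_\mu^2 R - \omega_\mu^2}$, i.e.\ only on $k_\mu^2 R - \omega_\mu^2$, which is a polynomial in $R$. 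Carrying this out for all $\mu$ shows $g(R)$ is a polynomial in $R$. Concretely, one can pair $f_\sigma$ with $f_{-\sigma}$ and write
$$
f_\sigma(R) \, f_{-\sigma}(R) = R^2 - \frac{1}{n^2}\left(\sum_{\mu=1}^n \sigma_\mu \sqrt{k_\mu^2 R - \omega_\mu^2}\right)^{2},
$$
then expand the square; the cross terms $\sigma_\mu \sigma_\nu \sqrt{(k_\mu^2 R - \omega_\mu^2)(k_\nu^2 R - \omega_\nu^2)}$ still contain square roots, so one pass does not suffice, and the cleanest route is to invoke the fundamental theorem of symmetric functions: $g$ is a polynomial in the power sums of the $t_\mu := \sqrt{k_\mu^2 R - \omega_\mu^2}$ that is invariant under all sign changes, hence a polynomial in the $t_\mu^2 = k_\mu^2 R - \omega_\mu^2$, each of which is a polynomial in $R$.

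For the degree count, each $f_\sigma(R) = -R + \frac1n\sum_\mu \sigma_\mu\sqrt{k_\mu^2 R - \omega_\mu^2}$ behaves like $-R$ as $R \to \infty$ (the square-root terms grow like $\sqrt{R}$), so $f_\sigma$ contributes a leading term $(-R)$ up to lower-order corrections; hence $g(R) = \prod_\sigma f_\sigma(R) \sim (-R)^{2^n} = R^{2^n}$, giving $\deg g = 2^n$ with leading coefficient $1$. (One should be slightly careful that no unexpected cancellation occurs among the $2^n$ leading $(-R)$ terms — but since they all equal $-R$, the product's leading term is unambiguously $(-1)^{2^n}R^{2^n} = R^{2^n}$.)

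For the factor $R^2$: evaluate $g$ at $R = 0$. Then $f_\sigma(0) = \frac1n\sum_\mu \sigma_\mu\sqrt{-\omega_\mu^2} = \frac{i}{n}\sum_\mu \sigma_\mu |\omega_\mu|$, which vanishes precisely when $\sum_\mu \sigma_\mu |\omega_\mu| = 0$. The pair $\sigma = (\operatorname{sign}\omega_1,\dots,\operatorname{sign}\omega_n)$ — more precisely, choosing $\sigma_\mu$ so that $\sigma_\mu|\omega_\mu| = \omega_\mu$ — gives $\sum_\mu \sigma_\mu|\omega_\mu| = \sum_\mu \omega_\mu = 0$ by \textbf{IC1}; likewise $-\sigma$ works. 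So at least two factors $f_\sigma$ vanish at $R=0$, hence $R^2 \mid g(R)$. (If further factors vanish at $R = 0$, that only means $h$ itself is divisible by $R$, which is still consistent with the claim as stated; but one may want to argue that for $\omega$ not all zero, generically exactly two sign patterns satisfy $\sum \sigma_\mu|\omega_\mu| = 0$, so that the stated factorization $g = R^2 h$ with $\deg h = 2^n - 2$ is exactly right — the degree of $h$ follows immediately from $\deg g = 2^n$ regardless.) Setting $h(R) := g(R)/R^2$ then gives a polynomial of degree $2^n - 2$, completing the proof.

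The main obstacle is making the "g is a polynomial" step fully rigorous without hand-waving: the single pairing $f_\sigma f_{-\sigma}$ leaves residual square roots, so one must either iterate the pairing argument over all coordinates and track that after each pass the remaining irrationalities shrink, or cleanly cite the symmetric-function principle. I would present the symmetric-function argument, as it is the least error-prone; everything else (degree, leading coefficient, the $R^2$ factor via \textbf{IC1}) is then routine.
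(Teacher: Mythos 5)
Your proposal is correct and takes essentially the same route as the paper: polynomiality and the degree $2^n$ come from viewing $g$ as the product over all $2^n$ sign-conjugates with leading term $(-R)^{2^n}$, and the factor $R^2$ comes from the two sign patterns $\pm\operatorname{sign}\omega$, whose factors vanish at $R=0$ by \textbf{IC1}. The only cosmetic difference is that the paper justifies the inference from ``at least two factors vanish at $0$'' to $R^2\mid g$ by explicitly checking $g(0)=g'(0)=0$ via the product rule, a small step you leave implicit.
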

\begin{proof}
Since $g$ is a product over all $2^n$ conjugates,
it immediately follows that $g$ is a polynomial with leading
term $(-R)^{2^n}$ showing that $g$ is a polynomial of degree $2^n$.

In order to show that $R^2$ is a factor of $g$, we 
simply need to show that $g(0) = g'(0) = 0$.
To that end, consider $\sigma_\omega = \sign \omega \in \{-1,+1\}^n$ 
where $\sign \omega_i = 1$ if $\omega_i \geq 0$, otherwise $\sign \omega_i = -1$.
Then, 
$$f_{\sigma_\omega}(0) = \frac{1}{n}\sum_{\mu=1}^n \sign \omega_\mu \sqrt{-\omega_\mu^2} = \frac{\sqrt{-1}}{n} \sum_{\mu=1}^n \omega_\mu = 0$$
by \textbf{IC1}.  By \eqref{eq:UnivariatePolynomial}, 
this immediately shows that $g(0) = 0$ since one of the terms in the product is $0$.

By a similar argument as above, $f_{-\sigma_\omega}(0) = 0$
by \textbf{IC1}.  This shows that at least two terms in 
the product defining $g$ in \eqref{eq:UnivariatePolynomial}
are zero.  Hence, the product rule for differentiation
shows that $g'(0) = 0$.  
\end{proof}

\begin{example}
For $n = 2$, we have
$$
\hbox{\small $
g(R) = R^4 -\frac{1}{2}\left(k_1^2+k_2^2\right)R^3 + \frac{1}{16}\left(\left(k_1^2-k_2^2\right)^2 + 8\left(\omega_1^2 + \omega_2^2\right)\right)R^2
- \frac{1}{8}\left(k_1^2-k_2^2\right)\left(\omega_1^2-\omega_2^2\right)R + \frac{1}{16}\left(\omega_1^2-\omega_2^2\right)^2$}
$$
which is indeed a polynomial of degree $2^2=4$.  
Moreover, {\rm\textbf{IC1}} implies $\omega_2 = -\omega_1$ so that
\begin{equation}\label{eq:g2}
g(R) = R^2\left(R^2 -\frac{1}{2}\left(k_1^2+k_2^2\right)R + \frac{1}{16}\left(\left(k_1^2-k_2^2\right)^2 + 16\omega_1^2\right)\right).
\end{equation}
\end{example}

Proposition~\ref{prop:Degree} 
immediately provides the following upper bound.

\begin{theorem}\label{thm:UpperBound}
If $\omega\in\bR^n$ and $k\in\bR_{> 0}^n$
satisfy {\rm\textbf{IC1}--\textbf{IC3}},
then there are at most $2^n-2$ equilibria
satisfying \eqref{eq:Kuramoto_equilibrium}.
\end{theorem}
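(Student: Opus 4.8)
The plan is to route the count through the decoupled description of Theorem~\ref{thm: reduction} and then bound the number of positive roots of the single polynomial $g$ from \eqref{eq:UnivariatePolynomial}, counted with multiplicity. First I would do the bookkeeping: by Theorem~\ref{thm: reduction}, every equilibrium $\theta\in\Theta_{\omega,k}$ is determined by a pair $(\sigma,R)$ with $\sigma\in\{-1,+1\}^n$ and $R$ a positive root of $f_\sigma$, via $\sin\theta_\nu=\omega_\nu/(k_\nu\sqrt R)$ and $\operatorname*{sign}\cos\theta_\nu=\sigma_\nu$; conversely $(\sigma,R)$ is recovered from $\theta$ because $\sigma=\operatorname*{sign}\cos\theta$ and $R=\bigl(\omega_\nu/(k_\nu\sin\theta_\nu)\bigr)^2$ for any index $\nu$ with $\omega_\nu\neq 0$, which exists by \textbf{IC2}. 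Hence $\theta\mapsto(\sigma,R)$ is injective, so $\#\Theta_{\omega,k}$ equals the number of pairs $(\sigma,R)$ that actually arise; and for such a pair the condition $\operatorname*{sign}\cos\theta_\nu=\sigma_\nu\in\{-1,+1\}$ forces $\cos\theta_\nu\neq 0$, i.e. the \emph{strict} inequality $k_\nu^2 R-\omega_\nu^2>0$ for every $\nu$. I will use this strictness crucially below.

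Second, I would group the valid pairs by their $R$-value: for each positive $R_0$ that occurs, let $m(R_0)$ be the number of sign vectors $\sigma$ with $(\sigma,R_0)$ a valid pair, so that $\#\Theta_{\omega,k}=\sum_{R_0>0}m(R_0)$. The key claim is that $R_0$ is a root of the polynomial $g$ of multiplicity at least $m(R_0)$. Indeed, at $R_0$ all radicands $k_\mu^2 R-\omega_\mu^2$ are strictly positive, so each $\sqrt{k_\mu^2 R-\omega_\mu^2}$ — and therefore each factor $f_\sigma$ of $g$ — is real-analytic on a neighborhood of $R_0$; writing $f_\sigma(R)=(R-R_0)^{d_\sigma}u_\sigma(R)$ with $u_\sigma(R_0)\neq 0$ and $d_\sigma\geq 0$, the product $g=\prod_\sigma f_\sigma$ vanishes at $R_0$ to order $\sum_\sigma d_\sigma$, which is at least the number of factors vanishing there, namely $m(R_0)$. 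Since $g$ is a genuine polynomial by Proposition~\ref{prop:Degree}, this order of vanishing is exactly the root multiplicity.

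Finally I would assemble the bound. By Proposition~\ref{prop:Degree}, $g(R)=R^2 h(R)$ with $h$ a polynomial of degree $2^n-2$, and for $R_0>0$ the factor $R_0^2$ is nonzero, so $R_0$ is a root of $h$ of the same multiplicity. Therefore
\[
\#\Theta_{\omega,k}=\sum_{R_0>0}m(R_0)\;\leq\;\sum_{R_0>0}\operatorname{mult}_{R_0}(h)\;\leq\;\deg h=2^n-2,
\]
which in particular shows $\Theta_{\omega,k}$ is finite. The main obstacle is the second step: one is tempted to argue "distinct equilibria give distinct positive roots of $h$," but this fails in degenerate parameter configurations — for instance when $k_i=k_j$ and $\omega_i=-\omega_j$ one can have $f_\sigma=f_{\sigma'}$ on the relevant interval, so two equilibria share the same $R$ while differing in $\sigma$. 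The multiplicity argument via real-analyticity of the factors $f_\sigma$ near $R_0$ is precisely what absorbs this degeneracy, and it is exactly there that the strict inequality $k_\nu^2 R>\omega_\nu^2$ established in the first step is used.
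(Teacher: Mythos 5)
Your proposal is correct and follows the same route as the paper: Theorem~\ref{thm: reduction} reduces the count to positive roots of the $f_\sigma$, and Proposition~\ref{prop:Degree} gives $g=R^2h$ with $\deg h=2^n-2$, which is exactly the paper's one-line argument. The only difference is that you supply the bookkeeping the paper leaves implicit --- namely that several sign patterns $\sigma$ can share a positive root $R_0$, and that the resulting equilibria are still absorbed into $\deg h$ because the strict positivity of the radicands makes each factor $f_\sigma$ real-analytic at $R_0$, so the root multiplicity of $g$ there is at least the number of vanishing factors; this is a worthwhile clarification but not a different proof.
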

\begin{proof}
This follows from Theorem~\ref{thm: reduction}
since $g(R)$ in \eqref{eq:UnivariatePolynomial} 
has at most $2^n-2$ positive roots.
\end{proof}

\begin{corollary}\label{cor:ComplexBound}
The generic root count modulo shift 
to \eqref{eq:Kuramoto_poly} is $2^n-2$.
\end{corollary}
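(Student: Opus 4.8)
The plan is to establish two directions: first, that $2^n-2$ is an upper bound for the generic root count modulo shift of \eqref{eq:Kuramoto_poly}, and second, that this bound is attained for generic parameters.

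For the upper bound, I would argue as follows. Fixing a shift representative for \eqref{eq:Kuramoto_poly} amounts to imposing one extra real (or complex) condition; a natural choice compatible with the earlier development is \textbf{OC1} (equivalently $\sum_\mu k_\mu s_\mu = 0$ together with a sign/positivity condition on $\sum_\mu k_\mu c_\mu$). Over $\mathbb{C}$ the positivity part is vacuous, so the complex solution set modulo shift is cut out by \eqref{eq:Kuramoto_poly} together with $\sum_\mu k_\mu s_\mu = 0$. Now I would mimic the proof of Theorem~\ref{thm: reduction} verbatim but over $\mathbb{C}$: the same algebraic manipulations (writing $s_\nu, c_\nu$ in terms of $e^{i\theta_\nu}$, introducing $r$, etc.) go through, the only difference being that over $\mathbb{C}$ we no longer restrict to $r > 0$ or impose sign conditions on $\cos\theta_\nu$. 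The key observation is that the map sending a solution to $R = r^2$ together with the branch choices $\sigma \in \{-1,+1\}^n$ (recording which square root of $1 - s_\nu^2$ is taken for $c_\nu$) is finite-to-one in a controlled way, and the possible values of $R$ are exactly the roots of $g(R)$ from \eqref{eq:UnivariatePolynomial}. Since Proposition~\ref{prop:Degree} gives $g(R) = R^2 h(R)$ with $\deg h = 2^n - 2$, and the root $R = 0$ is excluded (it forces $\omega = 0$, i.e.\ contradicts \textbf{IC2}, just as in the proof of Theorem~\ref{thm: reduction}), there are at most $2^n - 2$ admissible values of $R$, each giving at most one solution $\theta$ once the branch data is fixed. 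A bookkeeping step is needed to check that each nonzero root $R$ of $h$, together with each of the $2^n$ sign vectors $\sigma$ satisfying $f_\sigma(R)=0$, corresponds to exactly one complex solution modulo shift, so that the total count does not exceed $2^n - 2$; since $f_\sigma(R) = 0$ selects the sign vector $\sigma$ compatibly with the constraint, generically exactly one $\sigma$ works per root of $h$, giving $2^n-2$.

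For the lower bound (genericity), I would invoke the standard principle that the number of solutions of a parametrized polynomial system is constant on a Zariski-open dense subset of parameter space and is the maximum over all parameters. Thus it suffices to exhibit one choice of $(\omega, k)$ satisfying \textbf{IC1}--\textbf{IC3} for which the polynomial $h(R)$ has $2^n - 2$ distinct positive real roots, each yielding a genuine real equilibrium via Theorem~\ref{thm: reduction}; then the generic complex count is at least $2^n-2$, hence exactly $2^n - 2$. This is precisely the content promised in the later subsection on counting equilibria in particular cases, so I would either forward-reference the explicit construction given there or, more cleanly, note that the real lower bound examples constructed in Section~\ref{sec:UpperBound} (which achieve $2^n-2$ equilibria) force the generic complex root count up to the already-established ceiling of $2^n - 2$.

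The main obstacle I anticipate is the careful correspondence in the upper-bound direction: making rigorous that passing from \eqref{eq:Kuramoto_poly}-plus-shift-normalization to the univariate $g(R)$ does not lose or multiply-count solutions over $\mathbb{C}$ — in particular, confirming that for generic parameters each nonzero root of $h$ contributes exactly one point (not fewer, not more) to the solution set modulo shift, and that the two ``phantom'' roots at $R = 0$ genuinely correspond to the excluded degenerate locus rather than to honest solutions. The positivity conditions that made the real case in Theorem~\ref{thm: reduction} clean are absent over $\mathbb{C}$, so one must instead track the branch-of-square-root data $\sigma$ explicitly and argue that the equation $f_\sigma(R) = 0$ itself encodes the consistency of those branch choices, pinning down $\sigma$ from $R$ generically. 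Once that bijection is nailed down, both inequalities close and Corollary~\ref{cor:ComplexBound} follows. $\Box$
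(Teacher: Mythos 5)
Your upper-bound direction is sound and is essentially the paper's own argument: complexify the reduction of Theorem~\ref{thm: reduction}, observe that the admissible values of $R$ are nonzero roots of $g = R^2 h$ from Proposition~\ref{prop:Degree}, and conclude the count modulo shift is at most $\deg h = 2^n-2$. The gap is in your lower-bound direction. You propose to force the generic \emph{complex} count up to $2^n-2$ by exhibiting parameters with $2^n-2$ \emph{real} equilibria, forward-referencing ``the real lower bound examples constructed in Section~\ref{sec:UpperBound} (which achieve $2^n-2$ equilibria).'' Those constructions do not achieve $2^n-2$: Corollary~\ref{cor:Even} gives $2^n - \binom{n}{n/2}$ equilibria for even $n$ and Theorem~\ref{thm:Odd} gives $2^n - \binom{n-1}{(n-1)/2}$ for odd $n$, both strictly smaller than $2^n-2$ once $n\geq 4$; indeed Conjecture~\ref{conj:MaxReal} asserts that $2^n-2$ real equilibria is \emph{never} attained for $n\geq 4$. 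So the witness your argument needs is not available in the paper and, conjecturally, does not exist at all. Your appeal to the principle that the generic count is the maximum over parameters is correct, but it must be fed a parameter value with $2^n-2$ isolated \emph{complex} solutions, and real equilibria cannot supply enough of them here.

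The paper closes the lower bound differently and entirely over $\mathbb{C}$: for generic $(\omega,k)$ one has $g''(0)\neq 0$, i.e.\ $h(0)\neq 0$, so $g$ has exactly $2^n-2$ nonzero roots; generically these are distinct, each root $R$ of $h$ belongs to exactly one factor $f_\sigma$, and the reduction then produces exactly one complex solution modulo shift from each such $(\sigma,R)$. This gives $2^n-2$ complex solutions generically without any reference to realness. If you replace your real-witness step with this direct count of the nonzero complex roots of $h$ (verifying, as you already flagged, that the $(\sigma,R)\mapsto\theta$ correspondence is one-to-one generically), your proof goes through.
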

\begin{proof}
Reviewing the proof of Theorem~\ref{thm: reduction}
shows that $2^n-2$ also bounds the number of complex
solutions to~\eqref{eq:Kuramoto_poly}.
For $g$ in~\eqref{eq:UnivariatePolynomial},
$g''(0)\neq 0$ for generic
values of the parameters yielding
that there are generically $2^n-2$ nonzero roots
of $g$.  Hence, $2^n-2$ is the generic
root count of~\eqref{eq:Kuramoto_poly}.
\end{proof}

\begin{example}
Table~\ref{tab:comparison}
shows that 
the polynomial system \eqref{eq:Kuramoto_poly}
for $n = 4$, $\omega = (-3/4,-1/4,1/4,3/4)$, and
$k = (\sqrt{1.5},\sqrt{1.5},\sqrt{1.5},\sqrt{1.5})$
has $12$ complex roots modulo shift,
which is less than the generic root count
of $2^4-2=14$.  In fact, as in the proof
of Prop.~\ref{prop:Degree}, this is due
to the following four quantities being equal to zero:
$$\sum_{i=1}^4 \omega_i,~~~\sum_{i=1}^4 -\omega_i,~~~
\omega_1-\omega_2-\omega_3+\omega_4,~~~
-\omega_1+\omega_2+\omega_3-\omega_4.$$
Hence, $g$ in \eqref{eq:UnivariatePolynomial}
has $g(0) = g'(0) = g''(0) = g'''(0) = 0$, namely
$$g(R) =\frac{R^4}{1073741824}(64R^4-96R^3+20R^2+1)
(64R^2-24R+9)^2(64R^2-24R+1)^2.$$
\end{example}

Theorem~\ref{thm:UpperBound} provides an upper bound
of $2^n-2$ when the symmetric coupling matrix 
has rank one while~\cite{baillieul1982} provides an upper
bound of $\binom{2n-2}{n-1}$ in the general case. 
By Stirling's formula,
$$\binom{2n-2}{n-1} \approx 4^n\cdot \frac{1}{4\sqrt{\pi(n-1)}}$$
showing the bound in Theorem~\ref{thm:UpperBound} 
for the rank-one case is roughly 
the square root of the general purpose
bound from \cite{baillieul1982}.  
Due to this difference, we computed
the generic root counts for
the corresponding polynomial system
associated with~\eqref{eq:Kuramoto_equilibrium2}
when the coupling matrix $\kappa$
is a symmetric matrix of various ranks
for $n = 2,\dots,10$ using {\tt Bertini}~\cite{BHSW06}.
The results are presented in Table~\ref{table:comparisonBound}.
This data, for selected values of $r$ and $n$, 
shows that the generic root counts for
a symmetric coupling matrix of rank $r$ and rank~$r+1$
are equal whenever $n \leq 2r+1$
and differ when $n \geq 2r+2$.
In fact, the difference between 
the generic root counts for 
rank $r$ and rank~$r+1$ symmetric coupling matrices 
when $n = 2r+2$ is equal to $\binom{2r+2}{r+1} = \binom{n}{n/2}$.  We leave it as an open problem to fully 
understand the behavior for all choices of $r$ and $n$.

\begin{table}[!ht]
\begin{center}
\begin{tabular}{c|c|c|c|c|c}
$n$ & rank $1$ & rank $2$ & rank $3$ & rank $4$ & rank $5$\\
\hline
2 & 2 & 2 & 2 & 2 & 2\\
3 & 6 & 6 & 6 & 6 & 6\\
4 & 14 & 20 & 20 & 20 & 20\\
5 & 30 & 70 & 70 & 70 & 70\\
6 & 62 & 232 & 252 & 252 & 252\\
7 & 126 & 714 & 924 & 924 & 924\\
8 & 254 & 2056 & 3362 & 3432 & 3432\\
9 & 510 & 5646 & 11,860 & 12,870 & 12,870\\
10 & 1022 & 14,864 & 40,136 & 48,368 & 48,620 \\
\end{tabular}
\end{center}
\caption{Generic root counts for 
symmetric coupling matrices of various ranks}\label{table:comparisonBound}
\end{table}

\subsection{Counting equilibria for particular cases}
\label{sec:NumberReal}

Motivated by \cite{XKL16}, we use
Theorem~\ref{thm: reduction} 
to analyze the number of equilibria 
satisfying~\eqref{eq:Kuramoto_equilibrium}
for particular cases 
when $n$ is even (Theorem~\ref{thm:Even}
and Corollary~\ref{cor:Even}) and 
when $n$ is odd (Theorem~\ref{thm:Odd}).

\begin{theorem}
\label{thm:Even} Suppose that
$n\geq2$ is even and $q>0$. 
For $\omega=\left(nq,\ldots,nq,-nq,\ldots,-nq\right)$ 
and $k=(n,\ldots,n)$, there are exactly
\[
2^{n}-\sum_{-q < \ell < q }\left(
\begin{array}
[c]{c}%
n\\
n/2+\ell
\end{array}
\right)
\]
equilibria satisfying \eqref{eq:Kuramoto_equilibrium}
counting multiplicity.  Hence, the number of equilibria
changes precisely at the integers $q = 1,2,\dots,n/2$.
\end{theorem}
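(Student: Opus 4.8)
The parameters clearly satisfy \textbf{IC1}--\textbf{IC3} (all $k_\mu = n>0$, all $|\omega_\mu/k_\mu| = q$, $\sum_\mu \omega_\mu = 0$, $\omega\neq 0$ since $q>0$), so Theorem~\ref{thm: reduction} applies. The plan is to specialize that theorem and watch everything collapse because the $k_\mu$ are all equal and the $|\omega_\mu|$ are all equal. First I would observe that $k_\mu^2 R - \omega_\mu^2 = n^2(R-q^2)$ for every $\mu$, so for $R\geq q^2$,
$$f_\sigma(R) \;=\; -R + \tfrac1n\Bigl(\textstyle\sum_{\mu=1}^n \sigma_\mu\Bigr)\,n\sqrt{R-q^2} \;=\; -R + 2\ell_\sigma\sqrt{R-q^2},$$
where $2\ell_\sigma := \sum_{\mu=1}^n\sigma_\mu$ ranges over $\{-n,-n+2,\dots,n\}$. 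Thus $f_\sigma$ depends on $\sigma$ only through the integer $\ell_\sigma\in\{-n/2,\dots,n/2\}$, and there are exactly $\binom{n}{n/2+\ell}$ sign patterns with $\ell_\sigma=\ell$. So by Theorem~\ref{thm: reduction} the whole problem reduces to: for each $\ell$, count the positive roots of $\phi_\ell(R):=-R+2\ell\sqrt{R-q^2}$ on $[q^2,\infty)$, then weight by $\binom{n}{n/2+\ell}$.

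Next I would do this elementary root count. Because $\phi_\ell\,\phi_{-\ell} = R^2 - 4\ell^2 R + 4\ell^2 q^2$, a positive $R$ solves $\phi_\ell(R)=0$ iff $R^2-4\ell^2 R+4\ell^2q^2=0$ and $\ell>0$; moreover any positive root of that quadratic automatically satisfies $R>q^2$ (since $R^2 = 4\ell^2(R-q^2)>0$), so no spurious roots and no roots in $(0,q^2)$ arise. The cases are: $\ell\leq 0$ gives $\phi_\ell<0$ on $[q^2,\infty)$, hence no positive roots; $0<\ell<q$ gives negative discriminant $16\ell^2(\ell^2-q^2)$, hence no real roots; $\ell>q$ gives two distinct roots $R_\pm = 2\ell^2 \pm 2\ell\sqrt{\ell^2-q^2}$, both positive; and $\ell=q$ (which forces $q$ to be an integer) gives the single double root $R=2q^2$. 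Since all these roots satisfy $R>q^2$, we have $|\sin\theta_\nu| = q/\sqrt R < 1$ so $\cos\theta_\nu\neq0$, and hence each admissible pair $(\sigma,R)$ yields exactly one $\theta\in(-\pi,\pi]^n$, distinct pairs yielding distinct equilibria (a $\theta$ determines $\operatorname{sign}\cos\theta_\nu$ hence $\sigma$, hence $\ell_\sigma$). Therefore the number of equilibria counting multiplicity equals the number of positive roots of $g(R)=\prod_\sigma f_\sigma(R)$ in \eqref{eq:UnivariatePolynomial} counting multiplicity; writing $g(R) = (-1)^{c_0}R^{c_0}\prod_{\ell=1}^{n/2}(R^2-4\ell^2R+4\ell^2q^2)^{c_\ell}$ with $c_\ell=\binom{n}{n/2+\ell}$, this count is $\sum_{\ell\geq q}2\binom{n}{n/2+\ell}$, the $\ell=q$ factor being the perfect square $(R-2q^2)^{2c_q}$.

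Finally I would reconcile this with the stated formula. Let $\ell_0$ be the largest integer strictly below $q$. Using $\binom{n}{n/2+\ell}=\binom{n}{n/2-\ell}$ and $2^n = \sum_{\ell=-n/2}^{n/2}\binom{n}{n/2+\ell}$,
$$\sum_{\ell=\ell_0+1}^{n/2} 2\binom{n}{n/2+\ell} \;=\; 2^n - \binom{n}{n/2} - 2\sum_{\ell=1}^{\ell_0}\binom{n}{n/2+\ell} \;=\; 2^n - \sum_{-q<\ell<q}\binom{n}{n/2+\ell},$$
which is the asserted count. The last sentence then follows because this is a step function of $q$ whose value is constant on each interval $(m-1,m]$ and decreases by $2\binom{n}{n/2+m}>0$ exactly as $q$ crosses an integer $m$ with $1\leq m\leq n/2$ (equivalently, $\lceil q\rceil$ jumps), and equals $0$ for $q>n/2$.

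The calculus here is routine; the step I expect to require the most care is the multiplicity bookkeeping: verifying that the map $\theta\mapsto(\sigma,R)$ of Theorem~\ref{thm: reduction} is genuinely a bijection on the relevant sets in this example (no equilibria merged or lost, which rests precisely on $R>q^2$ so that no $\cos\theta_\nu$ vanishes), and handling the critical values $q\in\{1,\dots,n/2\}$, where $\phi_\ell\phi_{-\ell}$ degenerates to $(R-2q^2)^2$ for $\ell=q$ and the single geometric equilibrium must be counted with multiplicity two so that the formula is continuous from the left at these $q$.
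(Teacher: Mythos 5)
Your proposal is correct and follows essentially the same route as the paper's proof: specialize Theorem~\ref{thm: reduction}, observe that $f_\sigma$ depends only on $S=\sum_\mu\sigma_\mu=2\ell_\sigma$, reduce to the quadratic $R^2-S^2R+S^2q^2=0$, split into the cases $S\le 0$, $0<S<2q$, $S=2q$, $S>2q$, and sum binomial coefficients using the symmetry $\binom{n}{n/2+\ell}=\binom{n}{n/2-\ell}$. The extra bookkeeping you supply (the explicit factorization of $g$ and the bijection between pairs $(\sigma,R)$ and equilibria) is a harmless elaboration of what the paper leaves implicit.
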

\begin{proof}
Since $k_\mu^2 = n^2$ and $\omega_{\mu}^{2}=n^2q^{2}$, 
Theorem~\ref{thm: reduction}
shows that we need to compute all 
$R > 0$ where
\begin{equation}\label{eq:proofR}
R = 
\frac{1}{n}\sum_{\mu=1}^{n} \sigma_{\mu}\sqrt{n^2R-n^2q^2}
= \sum_{\mu=1}^{n}\sigma_{\mu}\sqrt{R-q^{2}}=S\sqrt{R-q^2}
\end{equation}
with $S=\sum_{\mu=1}^{n}\sigma_{\mu}$
and $\sigma\in\{-1,+1\}^n$.  

If $S\leq 0$, then \eqref{eq:proofR} has no positive solutions.
Since $n$ is even, the remaining cases 
have $S\geq 2$.  Thus, the positive solutions of \eqref{eq:proofR} must satisfy
$$R  =\frac{S}{2}\left(S\pm\sqrt{S^2-4q^2}\right) > 0.$$
This yields three cases:
\begin{enumerate}
\item $2 \leq S < 2q$: \eqref{eq:proofR} has no positive solutions;
\item $S = 2q \geq 2$: \eqref{eq:proofR} has one
positive solution of multiplicity 2, namely $R = S^2/2$;
\item $S > 2q$ with $S\geq2$: \eqref{eq:proofR} has two distinct positive solutions.
\end{enumerate}

Suppose that $q$ is not an integer.  
Since $S$ is even, we have $S \neq 2q$.  
Hence, the number of equilibria is exactly
$$2\cdot\#\left\{\sigma\in\left\{+1,-1\right\}^{n} : S>2q\right\}  
= 2\cdot\#\left\{\sigma\in\left\{+1,-1\right\}^{n} : S\geq 2\left\lceil q\right\rceil \right\}
= 2\cdot\sum_{\ell=\left\lceil q\right\rceil }^{n/2}\left(
        \begin{array}
        [c]{c}%
        n\\
        n/2+\ell
        \end{array}
        \right).$$
Since $\binom{n}{n/2+\ell} = \binom{n}{n/2-\ell}$
and $2^n = \sum_{\ell=0}^n \binom{n}{\ell}$, the number of equilibria when $q$ is not an integer is
$$2\cdot\sum_{\ell=\left\lceil q\right\rceil }^{n/2}\left(
        \begin{array}
        [c]{c}%
        n\\
        n/2+\ell
        \end{array}
        \right)
= 
\sum_{\ell=-n/2}^{-\left\lceil q\right\rceil }\left(
        \begin{array}
        [c]{c}%
        n\\
        n/2+\ell
        \end{array}
        \right) + 
\sum_{\ell=\left\lceil q\right\rceil }^{n/2}\left(
        \begin{array}
        [c]{c}%
        n\\
        n/2+\ell
        \end{array}
        \right)
        = 2^{n}-\sum_{-q < \ell < q}\left(
        \begin{array}
        [c]{c}%
        n\\
        n/2+\ell
        \end{array}
        \right).
$$      

When $q$ is an integer, we need to add in the case when $S = 2q$
yielding
$$2\cdot\#\left\{\sigma\in\left\{+1,-1\right\}^{n} : S\geq 2q\right\}  = 2\cdot\sum_{\ell = q}^{n/2} \left(
        \begin{array}
        [c]{c}%
        n\\
        n/2+\ell
        \end{array}
        \right) = 
        2^{n}-\sum_{-q < \ell < q}\left(
        \begin{array}
        [c]{c}%
        n\\
        n/2+\ell
        \end{array}
        \right).
$$
\end{proof}

\begin{example}
For $n=2$ and $q>0$, the case of 
$\omega=(2q,-2q)$ and $k=(2,2)$
corresponds to $\omega=(q/2,-q/2)$ and $k = (1,1)$
as considered in Section~\ref{sec:SmallCases}.
Hence, counting multiplicity, there are two equilibria
for $q \leq 1$ and no equilibria for $q > 1$
in agreement with Theorem~\ref{thm:Even}. 
\end{example}

\begin{example}\label{ex:n4}
For $n=4$ and $q>0$, the case of 
$\omega = (4q,4q,-4q,-4q)$ and $k = (4,4,4,4)$
corresponds to $\omega=(q/4,q/4,-q/4,-q/4)$
and $k = (1,1,1,1)$ as considered in Section~\ref{sec:SmallCases}.  Figure~\ref{fig:realn4}(a)
plots the regions based on the number
of equilibria when $k = (1,1,1,1)$  
such that $\omega_3 = \omega_4 = -(\omega_1+\omega_2)/2$.
With this setup, $\omega_1 = \omega_2 = q/4$ implies 
$\omega_3 = \omega_4 = -q/4$.  Since the sign is arbitrary,
the plot in Figure~\ref{fig:realn4}(b) 
incorporates the line $\omega_1 = \omega_2 = q/4$.
By Theorem~\ref{thm:Even},
there are $10$ equilibria for $0 < |q| < 1$,
$2$ equilibria for $1 < |q| < 2$,
and no equilibria for $|q| > 2$.  
\end{example}

\begin{figure}[!hptb]
\begin{center}
\includegraphics[scale=0.32]{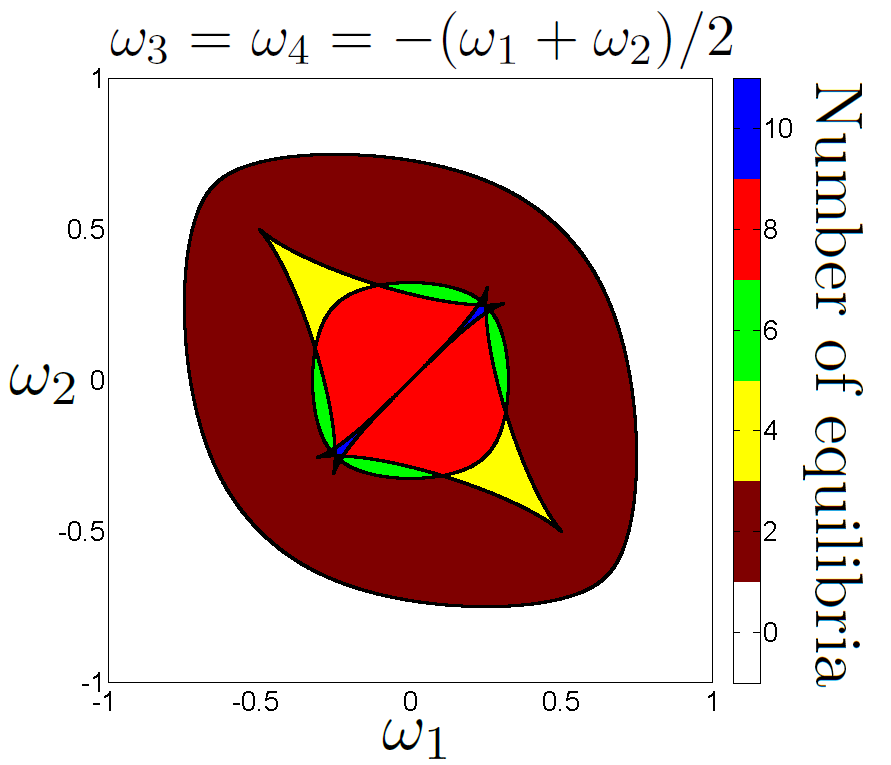}~~~~~
\includegraphics[scale=0.32]{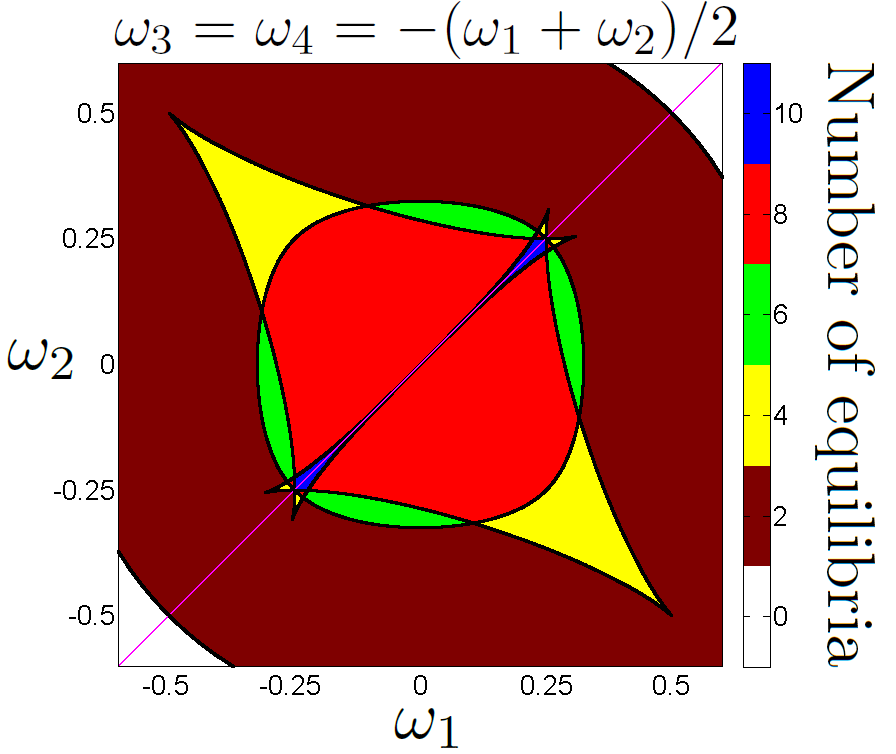}
(a)~~~~~~~~~~~~~~~~~~~~~~~~~~~~~~~~~~~~~~~~~~~~~~~~~~~~~~~~~~~~ (b) ~~~~
\end{center}
\caption{Regions based on the number of equilibria
satisfying~\eqref{eq:Kuramoto_equilibrium}
for $n = 4$ with a restricted set of~$\omega$
and $k = (1,1,1,1)$.
The diagonal line in (b) corresponds to results
from Theorem~\ref{thm:Even}.
}\label{fig:realn4}
\end{figure}

Theorem~\ref{thm:Even} immediately yields the following.

\begin{corollary}\label{cor:Even}
Suppose that $n\geq 2$ is even and $q > 0$.
The maximum number of distinct equilibria satisfying \eqref{eq:Kuramoto_equilibrium}
when $\omega = (nq,\dots,nq,-nq,\dots,-nq)$
and $k = (n,\dots,n)$ is
\begin{equation}\label{eq:CorEven}
2^n - \left(
        \begin{array}
        [c]{c}%
        n\\
        n/2
        \end{array}
        \right),
\end{equation}
which occurs for all $0 < q < 1$.
\end{corollary}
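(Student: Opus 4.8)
\noindent\emph{Proof plan.} The plan is to derive the corollary from Theorem~\ref{thm:Even}, whose count is \emph{with multiplicity}, by converting it into a count of \emph{distinct} equilibria on the range $0<q<1$ and then checking that every other value of $q$ yields strictly fewer.

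First I would specialize Theorem~\ref{thm:Even} to $0<q<1$. In this range the only integer $\ell$ with $-q<\ell<q$ is $\ell=0$, so the inner sum collapses to $\binom{n}{n/2}$ and the theorem produces exactly $2^n-\binom{n}{n/2}$ equilibria counting multiplicity. Second, I would upgrade this to a count of distinct equilibria by establishing two facts for $0<q<1$. Item (i): every contributing root is simple. In the proof of Theorem~\ref{thm:Even}, a positive solution $R$ of \eqref{eq:proofR} is a double root only in the case $S=2q$; since $S=\sum_\mu\sigma_\mu$ is even while $0<q<1$ is not an integer, this case does not arise, so for each admissible $\sigma$ all positive roots of $f_\sigma$ are simple. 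Item (ii): the assignment $(\sigma,R)\mapsto\theta$ is injective. By Theorem~\ref{thm: reduction}, $\sin\theta_\nu=\omega_\nu/(k_\nu\sqrt{R})$ and $\operatorname{sign}\cos\theta_\nu=\sigma_\nu$; since $\omega_1=nq\neq 0$ one recovers $\sqrt{R}$, hence $R$, from $\theta_1$, and since $|\sin\theta_\nu|=q/\sqrt{R}<1$ forces $\cos\theta_\nu\neq 0$, one recovers $\sigma$ from $\theta$ as well. Combining (i) and (ii), the number of distinct equilibria for $0<q<1$ equals the with-multiplicity count, namely $2^n-\binom{n}{n/2}$.

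Finally I would show that no other $q>0$ attains this value. For $q>1$ the inner sum in Theorem~\ref{thm:Even} picks up the terms $\ell=\pm 1$, so the with-multiplicity count, which upper-bounds the number of distinct equilibria, drops to at most $2^n-\binom{n}{n/2}-2\binom{n}{n/2+1}<2^n-\binom{n}{n/2}$. For $q=1$ the with-multiplicity count is again $2^n-\binom{n}{n/2}$, but case $S=2q=2$ of the proof of Theorem~\ref{thm:Even} produces $\binom{n}{n/2+1}>0$ double roots, so the number of distinct equilibria is only $2^n-\binom{n}{n/2}-\binom{n}{n/2+1}$. Hence the maximum over $q>0$ is $2^n-\binom{n}{n/2}$, attained precisely on the interval $0<q<1$.

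The only genuine subtlety, and the step I expect to require the most care, is the passage from the with-multiplicity count of Theorem~\ref{thm:Even} to a count of distinct equilibria, i.e.\ verifying items (i) and (ii): that the roots of \eqref{eq:proofR} are simple throughout the open interval $0<q<1$ and that distinct pairs $(\sigma,R)$ never collapse to the same angle vector $\theta$. Everything else is bookkeeping with binomial coefficients.
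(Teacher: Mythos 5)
Your proposal is correct and follows the same route as the paper, which simply states that Corollary~\ref{cor:Even} follows immediately from Theorem~\ref{thm:Even}. Your additional care in passing from the with-multiplicity count to the distinct count (simplicity of the roots for non-integer $q$, injectivity of $(\sigma,R)\mapsto\theta$ via $R>q^2$, and the strict drop at $q=1$ and $q>1$) is exactly the bookkeeping the paper leaves implicit, and it checks out.
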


\begin{example}\label{ex:n42}
For $n=4$, Corollary~\ref{cor:Even} 
provides a maximum of $2^4-\binom{4}{2}=10$ distinct equilibria
which matches the computational results in \cite{XKL16}
as discussed in Section~\ref{sec:SmallCases}.
\end{example}

Before considering the odd case, we first define  
the constants
\begin{equation}\label{eq:Constants}
q_o = \frac{\sqrt{414 - 66\sqrt{33}}}{16} \approx
0.3690
\hbox{~~~~and~~~~}
R_o = \frac{21-3\sqrt{33}}{8} \approx 0.4708,
\end{equation}
and prove an inequality regarding them.

\begin{lemma}\label{lemma:Odd}
For $0 < q < q_o$, $R_o + \sqrt{R_o} - 2\sqrt{R_o-q^2} < 0$
where $q_o$ and $R_o$ as defined in \eqref{eq:Constants}.
\end{lemma}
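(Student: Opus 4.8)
The plan is to turn this into a one-point check via monotonicity. Write $\phi(q) = R_o + \sqrt{R_o} - 2\sqrt{R_o - q^2}$. Before anything else I would record two elementary algebraic facts about the constants. Squaring $8R_o - 21 = -3\sqrt{33}$ gives $64R_o^2 - 336R_o + 441 = 297$, i.e. $R_o$ is a root of $4R^2 - 21R + 9$; hence $4R_o^2 = 21R_o - 9$, and evaluating this quadratic at $R=0$ and $R=3/5$ shows $0 < R_o < 3/5$ (in particular $R_o<1$). Second, substituting $\sqrt{33} = (21 - 8R_o)/3$ into $q_o^2 = (414 - 66\sqrt{33})/256$ yields $q_o^2 = (11R_o - 3)/16$, so $R_o - q_o^2 = (5R_o+3)/16 > 0$ and $q_o^2 < R_o$, giving $0 < q_o < \sqrt{R_o}$. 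Thus $\phi$ is real (and differentiable) on $[0,\sqrt{R_o})$, with $\phi'(q) = 2q/\sqrt{R_o - q^2} > 0$ for $q\in(0,\sqrt{R_o})$, so $\phi$ is strictly increasing there. Since $0 < q_o < \sqrt{R_o}$, it suffices to prove $\phi(q_o) \le 0$; then $\phi(q) < \phi(q_o) \le 0$ for $0<q<q_o$, which is the claim.

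To check $\phi(q_o)\le 0$, i.e. $R_o + \sqrt{R_o} \le 2\sqrt{R_o - q_o^2}$: both sides are positive, so this is equivalent to $(R_o+\sqrt{R_o})^2 \le 4(R_o - q_o^2)$. Expanding the left side as $R_o^2 + 2R_o\sqrt{R_o} + R_o$, using $R_o - q_o^2 = (5R_o+3)/16$ on the right, and using $4R_o^2 = 21R_o - 9$ to eliminate $R_o^2$, this reduces to $2R_o\sqrt{R_o} \le 3 - 5R_o$. The right side is positive because $R_o < 3/5$, so one more squaring gives the equivalent inequality $4R_o^3 \le (3-5R_o)^2$. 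Finally, using $R_o^2 = (21R_o - 9)/4$ to rewrite $R_o^3$ and $(3-5R_o)^2 = 9 - 30R_o + 25R_o^2$ as linear expressions in $R_o$, both sides turn out to equal $(405R_o - 189)/4$; so in fact $\phi(q_o) = 0$, and the desired strict inequality for $0<q<q_o$ follows from strict monotonicity of $\phi$.

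The monotonicity argument and the square-root bookkeeping are routine; the only genuine content is the two algebraic reductions, and the one thing to be careful about is ordering the sign checks correctly — recording $0 < R_o < 3/5$ (so that $3 - 5R_o > 0$ before the second squaring) and $R_o - q_o^2 > 0$ (so $\phi$ is real on $[0,q_o]$) — before any squaring step. I do not expect a real obstacle: once $4R_o^2 = 21R_o - 9$ and $q_o^2 = (11R_o-3)/16$ are in hand, the inequality collapses to the polynomial identity $4R_o^3 = 9 - 30R_o + 25R_o^2$, which holds by direct substitution.
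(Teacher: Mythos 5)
Your proof is correct and takes essentially the same route as the paper: both reduce the claim to monotonicity of $q \mapsto R_o + \sqrt{R_o} - 2\sqrt{R_o - q^2}$ on $(0,q_o)$ together with the identity $R_o + \sqrt{R_o} - 2\sqrt{R_o - q_o^2} = 0$. The only difference is that the paper asserts this terminal identity without proof (the constants are designed to satisfy it), whereas you verify it explicitly via $4R_o^2 = 21R_o - 9$ and $q_o^2 = (11R_o-3)/16$ — a welcome addition, and your algebra checks out.
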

\begin{proof}
Since $q < q_o$ and $R_o-q^2 > R_o -q_o^2 > 0$, we have
$$R_o + \sqrt{R_o} - 2\sqrt{R_o-q^2} <
R_o + \sqrt{R_o} - 2\sqrt{R_o-q_o^2} = 0.$$
\end{proof}

With Lemma~\ref{lemma:Odd}, we now consider the case when $n$ is odd.

\begin{theorem}\label{thm:Odd} 
Suppose that $n\geq3$ is odd and let $0 < q < q_o$
where $q_o$ is defined by \eqref{eq:Constants}.
For $\omega = (nq,\dots,nq,-nq,\dots,-nq,0)$
and $k=(n,\ldots,n)$, 
the number of equilibria satisfying \eqref{eq:Kuramoto_equilibrium} is 
        \begin{equation}  \label{eq:ThmOdd}
        2^n - \binom{n-1}{(n-1)/2}.
        \end{equation}
\end{theorem}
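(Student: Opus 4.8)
The plan is to apply Theorem~\ref{thm: reduction} and reduce the equilibrium count to the study of two one-parameter families of scalar equations, in the same spirit as the proof of Theorem~\ref{thm:Even}. First I would observe that the equilibrium equations~\eqref{eq:Kuramoto_equilibrium} are invariant under simultaneously permuting the oscillator indices, so after reindexing we may assume \textbf{IC3} holds, while \textbf{IC1} and \textbf{IC2} are immediate from $q>0$. Substituting $k_\mu = n$ together with $\omega_\mu^2 = n^2 q^2$ for the $n-1$ nonzero frequencies and $\omega_\mu = 0$ for the remaining one into the set $\mathcal{R}_{\omega,k,\sigma}$ of Theorem~\ref{thm: reduction}, the equilibria correspond to the positive roots $R$ of
\[
R \;=\; T\sqrt{R-q^2} \;+\; \sigma_n\sqrt{R},
\]
taken over all $\sigma\in\{-1,+1\}^n$, where $T = \sum_{\mu\neq n}\sigma_\mu$ is even and $\sigma_n\in\{-1,+1\}$. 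The correspondence $(\sigma,R)\mapsto\theta$ is injective here: since $0<q<1$ we have $\cos\theta_\nu\neq 0$, so $\sigma$ is recovered from $\operatorname{sign}\cos\theta_\nu$, and $R$ is recovered from $\sin\theta_\mu = q/\sqrt{R}$ at an index $\mu$ with $\omega_\mu>0$. Writing $u=\sqrt{R}$ with $u>q>0$, the equation becomes $u(u-1)=T\sqrt{u^2-q^2}$ when $\sigma_n=+1$ and $u(u+1)=T\sqrt{u^2-q^2}$ when $\sigma_n=-1$.

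For $\sigma_n=+1$, I would show that $\phi(u):=(u^2-u)/\sqrt{u^2-q^2}$ is a strictly increasing bijection from $(q,\infty)$ onto $\mathbb{R}$. A direct computation shows $\phi'$ has the sign of $u^3-2q^2u+q^2$, which equals $q^2(1-q)>0$ at $u=q$ and is increasing on $[q,\infty)$; moreover $\phi(u)\to-\infty$ as $u\to q^+$ and $\phi(u)\to+\infty$ as $u\to\infty$. Hence for every even $T$ there is exactly one root, so each of the $2^{n-1}$ sign patterns with $\sigma_n=+1$ yields exactly one equilibrium, contributing $2^{n-1}$ in total.

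For $\sigma_n=-1$, the relevant function is $\psi(u):=(u^2+u)/\sqrt{u^2-q^2}$, which is positive on $(q,\infty)$, so $\psi(u)=T$ has solutions only when $T\geq 2$ (recall $T$ is even). The key is to show $\psi$ is ``U-shaped'': $\psi(u)\to+\infty$ as $u\to q^+$ and as $u\to\infty$, while $\psi'$ has the sign of $P(u):=u^3-2q^2u-q^2$, which satisfies $P(q)<0$ and is strictly increasing on $[q,\infty)$, hence has a unique zero $u^*\in(q,\infty)$; thus $\psi$ decreases on $(q,u^*)$ and increases on $(u^*,\infty)$ with global minimum $\psi(u^*)$. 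Lemma~\ref{lemma:Odd} is exactly what is needed to separate this minimum from the constant $2$: since $\sqrt{R_o}\in(q,\infty)$, it gives
\[
\psi(u^*) \;\leq\; \psi\!\left(\sqrt{R_o}\right) \;=\; \frac{R_o+\sqrt{R_o}}{\sqrt{R_o-q^2}} \;<\; 2 \;\leq\; T,
\]
so every even $T\geq 2$ produces exactly two roots, i.e.\ two equilibria per such sign pattern. I expect this U-shape analysis of $\psi$ (locating the single critical point via $P$) together with the comparison against $2$ through Lemma~\ref{lemma:Odd} to be the main obstacle; the rest is bookkeeping.

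Finally I would assemble the count. The number of $\sigma\in\{-1,+1\}^{n-1}$ with $\sum_{\mu=1}^{n-1}\sigma_\mu = T$ is $\binom{n-1}{(n-1)/2+T/2}$, so the sign patterns with $\sigma_n=-1$ contribute
\[
2\sum_{j=1}^{(n-1)/2}\binom{n-1}{(n-1)/2+j} \;=\; 2^{n-1} - \binom{n-1}{(n-1)/2},
\]
using $\binom{n-1}{(n-1)/2+j}=\binom{n-1}{(n-1)/2-j}$ and $\sum_{\ell=0}^{n-1}\binom{n-1}{\ell}=2^{n-1}$. Adding the $2^{n-1}$ equilibria from $\sigma_n=+1$ gives $2^n - \binom{n-1}{(n-1)/2}$, as claimed. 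The strict inequalities above also show $T>\psi(u^*)$ strictly and $\phi$ strictly monotone, so all roots are simple and, by the injectivity noted at the start, the resulting equilibria are genuinely distinct with no multiplicity subtlety for $0<q<q_o$.
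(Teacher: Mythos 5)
Your proof is correct and follows essentially the same route as the paper's: reduce via Theorem~\ref{thm: reduction} to the scalar equation $R = S\sqrt{R-q^2}+\sigma_n\sqrt{R}$, count roots by a monotonicity/convexity analysis split on $\sigma_n$ and the parity-even partial sum, and invoke Lemma~\ref{lemma:Odd} to guarantee exactly two roots when $\sigma_n=-1$ and the sum is at least $2$, before the identical binomial bookkeeping. The differences are cosmetic: you substitute $u=\sqrt{R}$ and analyze the ratio functions $\phi,\psi$ where the paper works directly with $p_\sigma(R)$ and splits the $\sigma_n=+1$ analysis into three subcases ($S<0$, $S=0$, $S>0$), and you add a welcome explicit check that distinct $(\sigma,R)$ yield distinct equilibria.
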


\begin{proof}
Since $k_\mu^2 = n^2$ for $\mu = 1,\dots,n$,
$\omega_{\nu}^{2}=n^2q^2$ for $\nu=1,\dots,n-1$, 
and $\omega_n = 0$, 
Theorem~\ref{thm: reduction}
shows that we need to compute all 
$R > 0$ with
\begin{equation}\label{eqn:odd}
R = 
\frac{1}{n}\sum_{\mu=1}^{n-1} \sigma_{\mu}\sqrt{n^2R-n^2q^2}
+ \frac{1}{n} \sigma_n \sqrt{n^2R}
= \sum_{\mu=1}^{n-1}\sigma_{\mu}\sqrt{R-q^2}
+ \sigma_n \sqrt{R} =S\sqrt{R-q^2} + \sigma_n\sqrt{R}
\end{equation}
where $S=\sum_{\mu=1}^{n-1}\sigma_{\mu}$
and $\sigma\in\{-1,+1\}^n$.  Define $p_\sigma(R) = R - \sigma_n\sqrt{R} - S\sqrt{R-q^2}$.

Since $n-1$ is even, we know that $S$ is also even.  This
yields three cases to consider.

\paragraph{$S<0$:}
Rewriting \eqref{eqn:odd} as 
        \begin{equation*}
        R - \sigma_n\sqrt{R} = S\sqrt{R- q^2}
        \end{equation*}
shows that the right-hand size is non-positive. Hence,
to have a solution, we need $\sigma_n = +1$
and $R\in(q^2,1)$.  Since $p_\sigma(q^2) = q^2-q < 0$
and $p_\sigma(1) = -S\sqrt{1-q^2} > 0$, we know
that there is at least one root in $(q^2,1)$.  
In fact, since $S \leq -2$, it is easy to see that
$p_\sigma$ is a strictly increasing function on $(q^2,1)$ since
$$p_\sigma'(R) = 1 - \frac{1}{2\sqrt{R}} + \frac{-S}{2\sqrt{R-q^2}} \geq 1 - \frac{1}{2\sqrt{R}} + \frac{1}{\sqrt{R-q^2}}
\geq 1 + \frac{1}{2\sqrt{R}} > 0$$
for all $R\in(q^2,1)$.  Thus, this case yields
one equilibrium for each $\sigma\in\{-1,+1\}^n$
such that $\sigma_n = +1$ and $S < 0$
for a total~of 
        \[
        \frac{1}{2} \left(2^{n-1}-\binom{n-1}{(n-1)/2}\right).
        \]

\paragraph{$S=0$:} 
Since \eqref{eqn:odd} becomes $R = \sigma_n \sqrt{R}$, this case requires $\sigma_n = +1$ and $R = 1$.
The total number of equilibria for this case is thus
        \[
        \binom{n-1}{(n-1)/2}.
        \]

\paragraph{$S > 0$:} We split this into two cases based on
the value of $\sigma_n$.

\subparagraph{$\sigma_n = +1$:}
Rewriting \eqref{eqn:odd} as 
        \begin{equation*}
        R - \sqrt{R} = S\sqrt{R - q^2}
        \end{equation*}
shows that the right-hand size is nonnegative.  
Hence, to have a solution, we need $R > 1$.
Since $p_\sigma(1) = -S\sqrt{1-q^2} < 0$ and 
$\lim_{R\rightarrow\infty} p_\sigma(R) = \infty$,
we know that there is at least one root in $(1,\infty)$.
In fact, the root is unique since the graph 
of $p_\sigma$ is concave up due to 
$$p_\sigma''(R) = \frac{1}{4 R^{3/2}} + \frac{S}{4 (R-q^2)^{3/2}} > 0$$
for $R > 1$.  Hence, the total number of equilibria
for this case is 
        \[
        \frac{1}{2} \left(2^{n-1}-\binom{n-1}{(n-1)/2}\right).
        \]

\subparagraph{$\sigma_n = -1$:}
We need to compute the number of roots of $p_\sigma$
for $R > q^2$.  Since $S\geq 2$ and 
$R^{3/2} > (R-q^2)^{3/2}$ for all
$R > q^2$, it follows that
$$p_\sigma''(R) = -\frac{1}{4 R^{3/2}} + \frac{S}{4 (R-q^2)^{3/2}} > 0$$
when $R > q^2$.  Hence, $p_\sigma$ is concave up
on $R > q^2$ with $p_\sigma(q^2) = q^2 + q > 0$
and $\lim_{R\rightarrow\infty} p_\sigma(R) = \infty$.
Thus, the number of roots depends on the sign of the minimum
value of $p_\sigma$ on $R > q^2$.  
Since increasing $S$ makes $p_\sigma$ more negative
and Lemma~\ref{lemma:Odd} shows
that $p_\sigma(R_o) < 0$ when $S = 2$,
there are always two roots with $R > q^2$.
Hence, the total number of equilibria for this
case is 
        \[
        2^{n-1}-\binom{n-1}{(n-1)/2}.
        \]

The result is obtained by simply 
summing the number of equilibria from all
of these cases.
\end{proof}

\begin{example}\label{ex:odd}
For $n=3$, Theorem~\ref{thm:Odd}
shows that the number of equilibria
for $\omega=(3q,-3q,0)$ and $k=(3,3,3)$ 
is $2^{3}-\binom{2}{1}=6$
whenever $0 < q < q_o$
with $q_o$ defined in \eqref{eq:Constants}.
This is equivalent to the case when
$\omega = (q/3,-q/3,0)$ and $k = (1,1,1)$
for $0 < q < q_o$.  
Since the ordering of the elements in $\omega$
is arbitrary, Figure~\ref{fig:realn3q} 
is an enhanced version
of Figure~\ref{fig:realn3} that plots, in red,
the corresponding three segments
within the region having $6$ equilibria:
\begin{itemize}
\item $\{(\alpha,0,-\alpha)~:~|\alpha|<q_0\}$ is the horizontal segment,
\item $\{(0,\alpha,-\alpha)~:~|\alpha|<q_0\}$ is the vertical segment, and
\item $\{(\alpha,-\alpha,0)~:~|\alpha|<q_0\}$ is the diagonal
segment.
\end{itemize}

\begin{figure}[!hptb]
\begin{center}
\includegraphics[scale=0.35]{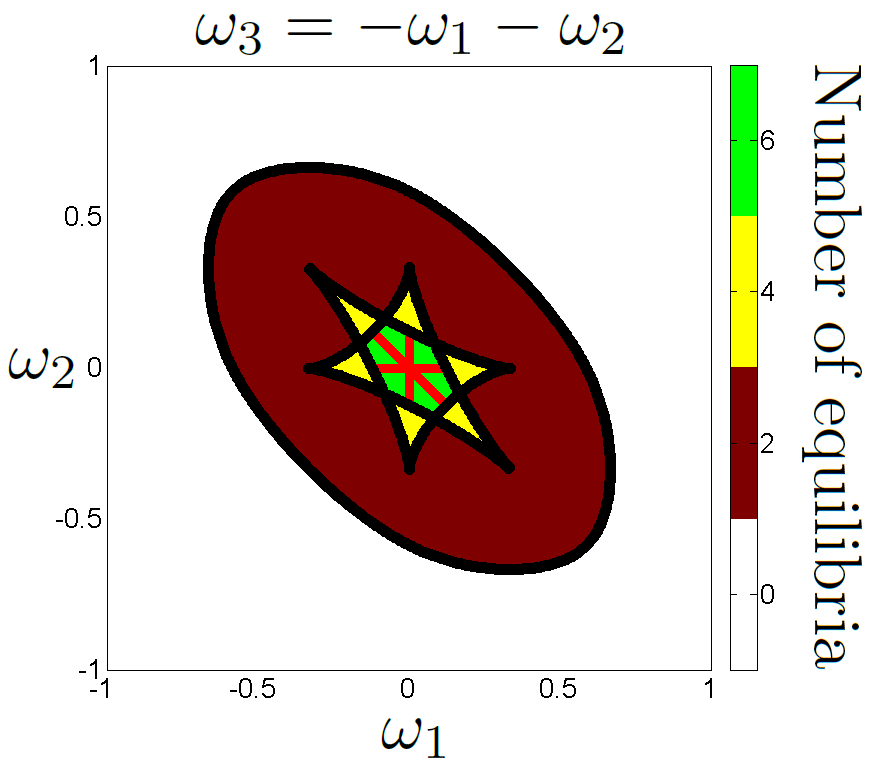}
\end{center}
\caption{Enhanced version of Figure~\ref{fig:realn3}
with the three segments from Ex.~\ref{ex:odd} plotted in red
}\label{fig:realn3q}
\end{figure}

\end{example}

The following suggests an upper bound
on the maximum number of equilibria.

\begin{conjecture}
\label{conj:MaxReal} For $n\geq2$, 
the maximum number of equilibria 
satisfying~\eqref{eq:Kuramoto_equilibrium} with $n$
oscillators~is
\[
\left\{
\begin{array}
[c]{ll}%
2^{n} - \displaystyle\binom{n}{n/2} & \hbox{~if $n$ is even,}\\
& \\
2^{n} - \displaystyle\binom{n-1}{(n-1)/2} & \hbox{~if $n$ is odd,}
\end{array}
\right.
\]
which are achieved in Corollary~\ref{cor:Even}
and Theorem~\ref{thm:Odd}, respectively.
\end{conjecture}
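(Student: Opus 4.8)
\noindent\emph{Proof strategy.}
The achievability half is already done: Corollary~\ref{cor:Even} realizes $2^{n}-\binom{n}{n/2}$ equilibria for even $n$ and Theorem~\ref{thm:Odd} realizes $2^{n}-\binom{n-1}{(n-1)/2}$ for odd $n$, so only the matching upper bound is needed, for every $\omega\in\bR^{n}$ and $k\in\bR_{>0}^{n}$ satisfying \textbf{IC1}--\textbf{IC3}. (For $n=2,3$ the asserted value equals $2^{n}-2$, so the conjecture is immediate from Theorem~\ref{thm:UpperBound}; the content lies in $n\ge4$.) By Theorem~\ref{thm: reduction} the number of equilibria is at most $\sum_{\sigma\in\{-1,+1\}^{n}}N(\sigma)$, where $N(\sigma)$ is the number of positive roots of $f_{\sigma}$ from \eqref{eq:Univar}. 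Theorem~\ref{thm:UpperBound} gives $\sum_{\sigma}N(\sigma)\le 2^{n}-2$; we must sharpen this by $\binom{n}{n/2}-2$ (even $n$) or $\binom{n-1}{(n-1)/2}-2$ (odd $n$), uniformly in the parameters.

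\emph{Stage 1 (a pointwise bound).} I would first show $N(\sigma)\le 2$ for every $\sigma$. With $f_{\sigma}(R)=-R+\tfrac1n\sum_{\mu}\sigma_{\mu}k_{\mu}\sqrt{R-a_{\mu}}$ and $a_{\mu}=(\omega_{\mu}/k_{\mu})^{2}$, the domain is $[a_{n},\infty)$ (so $a_{n}=\max_{\mu}a_{\mu}$ by \textbf{IC3}), $f_{\sigma}\to-\infty$ there, and by Proposition~\ref{pro:opt1} any root lies in the narrow window $\bigl[a_{n},(\tfrac1n\sum_{\mu\in\sigma_{+}}k_{\mu})^{2}\bigr]$. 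Since $f_{\sigma}''(R)=-\tfrac1{4n}\sum_{\mu}\sigma_{\mu}k_{\mu}(R-a_{\mu})^{-3/2}$ is a signed combination of the Chebyshev-type functions $(R-a_{\mu})^{-3/2}$, it has few sign changes; the aim is to combine this with the strictly decreasing $-R$ and the short window to conclude $N(\sigma)\le2$, or else to isolate the (parameter-dependent) exceptional patterns.

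\emph{Stage 2 (the combinatorial count).} Writing $Z_{j}=\#\{\sigma:N(\sigma)=j\}$, Stage~1 gives $Z_{0}+Z_{1}+Z_{2}=2^{n}$ and $\sum_{\sigma}N(\sigma)=Z_{1}+2Z_{2}=2^{n}-(Z_{0}-Z_{2})$, so it suffices to prove $Z_{0}-Z_{2}\ge\binom{n}{n/2}$ for even $n$ and $Z_{0}-Z_{2}\ge\binom{n-1}{(n-1)/2}$ for odd $n$ --- which holds with equality in the extremal families of Theorems~\ref{thm:Even} and~\ref{thm:Odd}. The inputs I would assemble: by Lemma~\ref{pro:opt3}, $\{\sigma:N(\sigma)\ge1\}$ is upward closed in the coordinatewise order, so $\{\sigma:N(\sigma)=0\}$ is a down-set; Proposition~\ref{pro:opt2} places an explicit down-set inside it and Proposition~\ref{pro:opt1} removes patterns whose positive set $\sigma_{+}$ is too light; and, partitioning $\{-1,+1\}^{n}$ into the $2^{n-1}$ pairs that differ only in coordinate $n$, the value $f_{\sigma}(R_{\min})$ at $R_{\min}=a_{n}$ is independent of $\sigma_{n}$ (the $\mu=n$ term vanishes there), so --- since $f_{\sigma}\to-\infty$ and $R_{\min}>0$ by \textbf{IC2} --- the involution $\epsilon\mapsto-\epsilon$ on $\{-1,+1\}^{n-1}$ forces at most $2^{n-2}$ truncations to make $f_{\sigma}(R_{\min})>0$, hence $Z_{1}\le2^{n-1}$. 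The goal would be to show these force the balanced layer $\{\sum_{\mu}\sigma_{\mu}=0\}$ (size $\binom{n}{n/2}$ for even $n$; for odd $n$, the analogous middle layer of $\{-1,+1\}^{n-1}$, of size $\binom{n-1}{(n-1)/2}$) to supply the required surplus of $Z_{0}$ over $Z_{2}$; for odd $n$ the $\sigma_{n}=-1$ branch needs in addition an estimate of the type in Lemma~\ref{lemma:Odd} with the constants~\eqref{eq:Constants}, ensuring the relevant $f_{\sigma}$ dips below zero and so contributes two roots rather than none.

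The main obstacle is to make Stages~1 and~2 hold \emph{uniformly}. When the $a_{\mu}$ are widely spread, $f_{\sigma}''$ can change sign several times for an adversarial $\sigma$, so even $N(\sigma)\le2$ is not automatic; and Proposition~\ref{pro:opt2} alone need not exclude as many as $\binom{n}{n/2}$ patterns once the $k_{\mu}$ are very unequal (e.g. $k=(\varepsilon,1,1,1)$ with small $\varepsilon$ leaves only three patterns killed by it when $n=4$), so the combinatorial surplus must be pieced together from Propositions~\ref{pro:opt1}--\ref{pro:opt2} and Lemmas~\ref{pro:opt3}--\ref{pro:opt5} together with the $\omega$-dependence --- exactly the part not controlled by the paper's existing lemmas. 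Two ways to finesse this: (i) a Schur-convexity/majorization argument showing that $\sum_{\sigma}N(\sigma)$ is largest when the $|\omega_{\mu}|$ are as equal as \textbf{IC1} permits and the $k_{\mu}$ are equal, reducing to the families already analyzed; or (ii) a global Hermite/Sturm signature computation for the polynomial $h=g/R^{2}$ of Proposition~\ref{prop:Degree}, showing directly that $h$ has at least $\binom{n}{n/2}-2$ roots that are not positive reals --- in the extremal case those roots all collapse onto $R=0$, which is why the count drops from $2^{n}-2$ to $2^{n}-\binom{n}{n/2}$.
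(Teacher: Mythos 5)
The statement you are addressing is Conjecture~\ref{conj:MaxReal}, which the paper explicitly leaves open: the authors prove only the achievability half (Corollary~\ref{cor:Even} for even $n$, Theorem~\ref{thm:Odd} for odd $n$) and the weaker upper bound $2^{n}-2$ of Theorem~\ref{thm:UpperBound}, and they state in the conclusion that determining the maximum remains a prominent open question. So there is no proof in the paper to compare against, and your proposal should be judged as an attempt to settle an open problem. You correctly isolate what is missing (the matching upper bound for $n\ge 4$, uniformly over all $\omega,k$ satisfying \textbf{IC1}--\textbf{IC3}) and correctly observe that for $n=2,3$ the conjectured value coincides with $2^{n}-2$ and is already covered. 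However, what you have written is a strategy outline, not a proof, and both of its load-bearing steps are unestablished.

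Concretely: Stage~1 asserts $N(\sigma)\le 2$ for every sign pattern, but $f_{\sigma}$ in \eqref{eq:Univar} is a signed combination of concave functions $\sqrt{k_{\mu}^{2}R-\omega_{\mu}^{2}}$ minus $R$, and when the signs are mixed it is not concave; nothing in Propositions~\ref{pro:opt1}--\ref{pro:opt2} or the degree count of Proposition~\ref{prop:Degree} (which only controls the \emph{total} number of roots over all $2^{n}$ patterns) rules out a single $f_{\sigma}$ having three or more positive roots. You acknowledge this yourself (``even $N(\sigma)\le 2$ is not automatic''), which means Stage~2's bookkeeping $Z_{0}+Z_{1}+Z_{2}=2^{n}$ does not get off the ground. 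Stage~2 then requires $Z_{0}-Z_{2}\ge\binom{n}{n/2}$ (resp.\ $\binom{n-1}{(n-1)/2}$) for \emph{arbitrary} admissible parameters, but the only tools available --- the down-set structure from Lemma~\ref{pro:opt3}, the partial-sum criterion of Proposition~\ref{pro:opt2}, and the interval of Proposition~\ref{pro:opt1} --- are parameter-dependent and, as your own example $k=(\varepsilon,1,1,1)$ shows, can exclude far fewer than $\binom{n}{n/2}$ patterns. The two ``finesses'' you offer (a majorization argument reducing to the extremal families, or a Hermite/Sturm signature computation for $h=g/R^{2}$) are each plausibly a paper's worth of work and are not carried out. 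In short, the proposal is a reasonable research plan whose two key lemmas are exactly the open content of the conjecture; it does not constitute a proof, and the statement remains a conjecture.
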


As summarized in Section~\ref{sec:SmallCases},
this conjecture matches the known cases
of $n = 2$ and $n = 3$, 
and agrees with the conjecture for $n = 4$ provided 
in \cite{XKL16} for the standard Kuramoto model.

\subsection{Asymptotic behavior}

\label{sec:Aspymptotic}

Even though we can only conjecture 
an upper bound on the number of equilibria,
the results from Corollary~\ref{cor:Even}
and Theorem~\ref{thm:Odd} provide
the following result: there can asymptotically be as many 
equilibria satisfying \eqref{eq:Kuramoto_equilibrium}
as the number of complex solutions to \eqref{eq:Kuramoto_poly}
modulo shift.

\begin{theorem}
\label{thm:Asymptotic} As $n\rightarrow\infty$, 
the ratio of the maximum number of equilibria 
satisfying \eqref{eq:Kuramoto_equilibrium} 
and the generic root count
to \eqref{eq:Kuramoto_poly} limits to $1$.
\end{theorem}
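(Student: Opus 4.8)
The plan is to sandwich the ratio between two sequences, both tending to $1$. Write $M_n$ for the maximum, over all $\omega\in\bR^n$ and $k\in\bR_{>0}^n$ satisfying \textbf{IC1}--\textbf{IC3}, of the number of distinct equilibria (modulo shift, represented via \textbf{OC1}) satisfying \eqref{eq:Kuramoto_equilibrium}. By Corollary~\ref{cor:ComplexBound} the generic root count modulo shift to \eqref{eq:Kuramoto_poly} is $2^n-2$, so the ratio in the statement is $M_n/(2^n-2)$, and it suffices to show this tends to $1$.

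For the upper bound, Theorem~\ref{thm:UpperBound} gives $M_n\le 2^n-2$, hence $M_n/(2^n-2)\le 1$. For the lower bound, I would invoke the explicit families: for $n$ even, Corollary~\ref{cor:Even} exhibits parameters with exactly $2^n-\binom{n}{n/2}$ distinct equilibria, so $M_n\ge 2^n-\binom{n}{n/2}$; for $n$ odd, Theorem~\ref{thm:Odd} (with any $0<q<q_o$, where $q_o$ is as in \eqref{eq:Constants}) exhibits parameters with exactly $2^n-\binom{n-1}{(n-1)/2}$ equilibria, so $M_n\ge 2^n-\binom{n-1}{(n-1)/2}$. Writing $b_n=\binom{n}{\lfloor n/2\rfloor}$ when $n$ is even and $b_n=\binom{n-1}{(n-1)/2}$ when $n$ is odd, we obtain
$$\frac{2^n-b_n}{2^n-2}\;\le\;\frac{M_n}{2^n-2}\;\le\;1.$$

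It then remains to check that the left-hand side tends to $1$, equivalently that $b_n/2^n\to 0$. This follows from the standard central binomial asymptotic, equivalently Stirling's formula as already used after Theorem~\ref{thm:UpperBound}: $\binom{m}{\lfloor m/2\rfloor}\sim \sqrt{2/(\pi m)}\,2^m$, so $b_n/2^n=O(n^{-1/2})\to 0$, whence $(2^n-b_n)/(2^n-2)=1-(b_n-2)/(2^n-2)\to 1$. The squeeze theorem then gives $M_n/(2^n-2)\to 1$, as claimed.

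There is essentially no obstacle beyond assembling results already proved: Corollary~\ref{cor:ComplexBound} supplies the denominator, Theorem~\ref{thm:UpperBound} the upper bound, and Corollary~\ref{cor:Even} together with Theorem~\ref{thm:Odd} the matching lower bound, while the only new quantitative input is the elementary fact that the central binomial coefficient is $o(2^n)$. The one point to state carefully is that all three ingredients must refer to the same counting convention (distinct equilibria modulo shift), so that the upper and lower bounds on $M_n$ are indeed comparable.
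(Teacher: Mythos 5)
Your proposal is correct and follows essentially the same route as the paper: sandwich the ratio between $1$ (via Theorem~\ref{thm:UpperBound} and Corollary~\ref{cor:ComplexBound}) and the explicit lower bounds from Corollary~\ref{cor:Even} and Theorem~\ref{thm:Odd}, then use Stirling's estimate for the central binomial coefficient to conclude by squeezing. The only cosmetic difference is that the paper splits the argument explicitly into the $n=2\ell$ and $n=2\ell+1$ subsequences rather than introducing a unified $b_n$.
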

\begin{proof}
For each $n\geq2$, let $\Omega(n)$ denote this ratio. 
Theorems~\ref{thm:UpperBound} and~\ref{thm:Odd}
together with Corollaries~\ref{cor:ComplexBound}
and~\ref{cor:Even} show that, for every $\ell\geq1$,
$$\frac{2^{2\ell} - \binom{2\ell}{\ell}}{2^{2\ell}-2} \leq \Omega(2\ell) \leq 1 \hbox{~~~~and~~~~}
\frac{2^{2\ell+1} - \binom{2\ell}{\ell}}{2^{2\ell+1}-2} \leq \Omega(2\ell+1) \leq 1.$$

Stirling's formula yields
\[
\lim_{\ell\rightarrow\infty}\frac{\binom{2\ell}{\ell}}{2^{2\ell}-2}=\lim
_{\ell\rightarrow\infty}\frac{\frac{2^{2\ell}}{\sqrt{\pi\ell}}}{2^{2\ell}%
-2}=0
\]
so that
\[
1 \geq \lim_{\ell\rightarrow\infty}\Omega(2\ell)\geq\lim_{\ell\rightarrow\infty}%
\frac{2^{2\ell}-\binom{2\ell}{\ell}}{2^{2\ell}-2}=\lim_{\ell\rightarrow\infty
}\frac{2^{2\ell}}{2^{2\ell}-2}-\lim_{\ell\rightarrow\infty}\frac{\binom{2\ell
}{\ell}}{2^{2\ell}-2}=1-0=1.
\]

Similarly, Stirling's formula yields
\[
\lim_{\ell\rightarrow\infty}\frac{\binom{2\ell}{\ell}}{2^{2\ell+1}-2}%
=\lim_{\ell\rightarrow\infty}\frac{\frac{2^{2\ell}}{\sqrt{\pi\ell}}}%
{2^{2\ell+1}-2}=0
\]
so that
\[
1 \geq \lim_{\ell\rightarrow\infty}\Omega(2\ell+1)\geq\lim_{\ell\rightarrow\infty}%
\frac{2^{2\ell+1}-\binom{2\ell}{\ell}}{2^{2\ell+1}-2}=\lim_{\ell
\rightarrow\infty}\frac{2^{2\ell+1}}{2^{2\ell+1}-2}-\lim_{\ell\rightarrow
\infty}\frac{\binom{2\ell}{\ell}}{2^{2\ell+1}-2}=1-0=1.
\]
Therefore, $\Omega(n)\rightarrow1$ as $n\rightarrow\infty$.
\end{proof}

\section{Conclusion}

\label{sec:Conclusion}

The Kuramoto model is a standard model used
to describe the behavior of coupled oscillators
which has proven to be useful in many applications,
e.g., electrical engineering~\cite{dorfler2013, wiesenfeld1998},
biology~\cite{sompolinsky1990}, and
chemistry~\cite{bar-eli1985, kuramoto1984, neu1980}.
When the coupling matrix is a symmetric matrix of
rank one, which is a slight generalization 
of the standard Kuramoto model \eqref{eq:Kuramoto}, 
the reformulation (Theorem~\ref{thm: reduction})
permits all equilibria 
to be computed efficiently and effectively 
(Section~\ref{sec:Performance})
without the need to compute all complex solutions 
to a corresponding polynomial system.  Moreover, this 
reformulation is also useful for computing an upper bound
on the number of equilibria (Theorem~\ref{thm:UpperBound}),
computing the exact number of equilibria
for particular cases (Theorem~\ref{thm:Even}
and Theorem~\ref{thm:Odd}), and understanding
the asymptotic behavior of the maximum number
of equilibria (Theorem~\ref{thm:Asymptotic}).

Even with the broad use of the Kuramoto model
and the new results presented in this paper
regarding the equilibria, many questions still remain.
One prominent question is how to compute the maximum number of equilibria
when the coupling matrix has rank one,
which we have conjectured (Conjecture~\ref{conj:MaxReal})
is strictly smaller than the upper bound of $2^n-2$ 
for all $n \geq 4$, 
an extension of the computational results 
for the standard Kuramoto when $n = 4$ from \cite{XKL16}. It is also unknown how to extend
our decoupling method to higher rank models.
One final question regards the relationship
between the rank of the coupling matrix,
the number of oscillators, and the number of equilibria 
(Table~\ref{table:comparisonBound}),
which may yield new approaches for computing all equilibria
when the coupling matrix has rank $r > 1$.

\section*{Acknowledgment}

We would like to thank Dhagash Mehta for helpful discussions regarding the Kuramoto model,
and Bernard Lesieutre and Dan Wu for sharing a
M{\sc atlab} implementation of their 
elliptical continuation method proposed 
in~\cite{lesieutre_wu_allerton2015}.

\bibliographystyle{plain}
{\small

}

\end{document}